\documentclass[12pt,a4paper]{article}
\usepackage[top=3.5cm, bottom=3.5cm, left=3cm, right=3cm]{geometry}
\usepackage[latin1]{inputenc}
\usepackage{csquotes}
\usepackage{amsmath}
\usepackage{amsthm}
\usepackage{amsfonts}
\usepackage{amssymb}
\usepackage{graphics}
\usepackage{float}
\usepackage[hang,flushmargin]{footmisc} 

\usepackage{amsmath,amsthm,amssymb,graphicx, multicol, array, nccmath}
\usepackage{enumerate}
\usepackage{enumitem}
\usepackage{xcolor}
\usepackage{multicol}

\usepackage[pagebackref,bookmarks,colorlinks,breaklinks,linktoc=page]{hyperref}
\hypersetup{linkcolor=blue,citecolor=red,filecolor=blue,urlcolor=blue} 

\usepackage{tabto}

\numberwithin{equation}{section}

\usepackage{tikz}
\usepackage{pgfplots}

\usetikzlibrary{matrix}

\usepackage{mathrsfs}

\DeclareMathOperator{\rad}{rad}

\newtheorem{thm}{Theorem}[section]
\newtheorem{lem}{Lemma}[section]

\newtheorem{conj}{Conjecture}[section]

\newtheorem{cor}{Corollary}[section]
\newtheorem{dfn}{Definition}[section]

\newtheorem{rmk}{Remark}[section]

\newcommand{\N}{\mathbb{N}}

\usepackage{fancyhdr}
\usepackage{titletoc}
\let\LaTeXStandardTableOfContents\tableofcontents

\renewcommand{\tableofcontents}{%
	\begingroup%
	\renewcommand{\bfseries}{\relax}%
	\LaTeXStandardTableOfContents%
	\endgroup%
}%

\usepackage{scalerel}[2016/12/29]

\title{Note on the Exceptional Set in the ABC Conjecture}
\date{}
\author{N. A. Carella}

\begin{document}
	\maketitle
	
\begin{abstract}
Fix $\varepsilon>0$, let $x>1$ be a large real number and let $\rad(n)=\prod_{p\mid n}p$ be the radical of an integer $n\geq1$. A triple $(a,b,c)$, with $a+b=c$ and $\gcd(a,b,c)=1$, such that $c>(\rad(abc))^{1+\varepsilon}$, is called exceptional triple. Recent works have proved that the cardinality $\#\mathscr{E}(x)$ of set $\mathscr{E}$ of exceptional triples satisfies $\#\mathscr{E}(x)=O(x^{2/3})$. This note proves that the cardinality of the exceptional set $\mathscr{E}(x)$ of triples $(a,b,c)$ is an infinite set unconditionally. 
		\let\thefootnote\relax\footnote{ \today \date{} \\
			\textit{AMS MSC2020}: Primary 11D75, 11D41; Secondary 11D45  \\
			\textit{Keywords}: Diophantine equation; Integer inequality; Additive group; Multiplicative group; abc Conjecture.}
\end{abstract}
\tableofcontents

\section{Introduction }\label{S1001}\hypertarget{S1001}
The symbol $\N=\{1,2,3,\ldots\}$ denotes the set of natural numbers. Let $\varepsilon>0$ be a fixed small real number, let $x>1$ be a large real number and let $\rad(n)=\prod_{p\mid n}p$ be the radical of an integer $n\geq1$. The integer solutions $(a,b,c)\in \N\times\N\times \N$ of the equation $X+Y=Z$, with $\gcd(a,b,c)=1$, such that  
\begin{equation}
	c> \rad(abc)^{1+\varepsilon},
\end{equation}
are called exceptional triples. The set of exceptional triples is defined by
\begin{eqnarray}
	\mathscr{E}(x)&=&\left\{\,(a,b,c)\in\N^3:\gcd(a,b,c)=1; a+b=c;\right .\\[.2cm]
	&&\hskip 2.5 in\left . c> x; \text{ and }	c> \rad(abc)^{1+\varepsilon}\,\right\}\nonumber.
\end{eqnarray}
The exceptional triples are closely related to counterexamples of the $abc$ conjecture. 
	\begin{conj}\label{conj1001.101}\hypertarget{conj1001.101}{\normalfont(The \textit{abc} conjecture)}   Given a fixed real number $\varepsilon > 0$, there exists a constant $c_{\varepsilon} $ 
	such that if $a + b = c$ and $\gcd(a, b, c) = 1$, then 
	\begin{equation}\label{eq1001.101.d}
		c\leq c_{\varepsilon}\rad(abc)^{1+\varepsilon}.
	\end{equation}
\end{conj}
There are several formulations of the $abc$ conjecture over the integers, as well as generalizations to number fields and function fields, see \cite{GT2002}, \cite{LS1990} and the vast literature. 	\\

It was recently proved that the cardinality of set $\mathscr{E}$ of exceptional triples satisfies $\#\mathscr{E}(x)=O(x^{2/3})$, see \cite{BT2024} and \cite{LJ2025}. This note provides a qualitative result about the exceptional set.
\begin{thm}\label{thm5757SI.940}\hypertarget{thm5757SI.940} For any small real number $\varepsilon>0$ and large $x\geq x_0$, the followings hold.
	\begin{enumerate}[font=\normalfont, label=(\roman*)]
	\item$\displaystyle \mathscr{E}(x)\ne\varnothing .$ 	
	\item$\displaystyle \#\mathscr{E}(x)\to \infty$ as $x\to \infty$. 
\end{enumerate}	
In particular, the union of exceptional sets 	
\begin{equation}
\mathscr{E}=\bigcup_{x\geq x_0}	\mathscr{E}(x)
\end{equation}
is an infinite subset of $ \N\times\N\times \N$ and in the limit, the cardinality of the exceptional set  	
\begin{equation}
\#	\mathscr{E}=\lim_{x\geq x_0}\#	\mathscr{E}(x)
\end{equation}
is unbounded.
\end{thm}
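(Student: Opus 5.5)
The natural plan is constructive: exhibit an explicit infinite family of coprime triples $a+b=c$ and show $c>\rad(abc)^{1+\varepsilon}$ along the family. Part (ii) then follows from part (i) by letting the parameter of the family tend to infinity, and the statements about the union $\mathscr{E}$ are immediate. So everything reduces to the construction. I should say at the outset that I do not think the statement as written can be proved, for the reason explained in the last paragraph.

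The first family to try is the classical one: $a=1$, $c=3^{2^k}$, $b=c-1$. Induction on $k$ gives $2^{k+2}\mid 3^{2^k}-1$, using the factorization $3^{2^{k}}-1=(3^{2^{k-1}}-1)(3^{2^{k-1}}+1)$. Hence
\begin{equation*}
\rad(abc)\le 3\cdot\frac{c-1}{2^{k+1}}, \qquad \text{so}\qquad \frac{c}{\rad(abc)}\gg 2^{k}\asymp \log c .
\end{equation*}
This yields infinitely many triples with $c>\rad(abc)$, and even with $c\gg \rad(abc)\log c$. The next step would be to upgrade this. Stewart and Tijdeman's refinements reach $c>\rad(abc)\exp\bigl(C\sqrt{\log c}/\log\log c\bigr)$, using lattice or pigeonhole arguments on $S$-unit equations. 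What is actually required, however, is the power saving $c>\rad(abc)^{1+\varepsilon}$, i.e.\ a gain of $c^{\varepsilon'}$ over the radical. That would need, for instance, $b=c-a$ to be divisible by a prime-power block of size $c^{\varepsilon'}$ while the rest of $abc$ stays squarefree-sized.

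I expect this upgrade to be the main obstacle, and I believe it is a genuine one rather than a technical one. The first thing I would try is the same idea with a growing tower: take $c=q^{m}$ and force $p^{e}\mid c-1$ with $p^{e}\ge c^{\varepsilon'}$. The order of $q$ modulo $p^{e}$ is about $p^{e-1}$. So forcing $p^{e}\mid q^{m}-1$ only gains a factor polynomial in $\log c$, never a power of $c$. Every known elementary and $S$-unit construction hits this same barrier.

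More fundamentally, Conjecture \ref{conj1001.101} implies that for fixed $\varepsilon>0$ only finitely many coprime triples satisfy $c>\rad(abc)^{1+\varepsilon}$. Parts (i) and (ii) of Theorem \ref{thm5757SI.940} for all large $x$ would therefore disprove the $abc$ conjecture. No construction of the type above can supply that, and the bound $\#\mathscr{E}(x)=O(x^{2/3})$ of \cite{BT2024,LJ2025} is an upper bound, so it gives no lower bound either. My honest proposal is thus the following. Carry out the constructive plan above, which proves the statement only with $\rad(abc)^{1+\varepsilon}$ replaced by $\rad(abc)\cdot h(c)$ for subpolynomial $h$, such as $h(c)=\log c$ or the Stewart--Tijdeman function. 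Record explicitly that the step to a genuine power $1+\varepsilon$ is the gap. Any purported proof of the full statement must contain an argument refuting the $abc$ conjecture, and I would scrutinize the author's argument at exactly that point.
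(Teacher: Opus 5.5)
\paragraph{Assessment.} Your proposal does not prove Theorem \ref{thm5757SI.940}, as you say yourself. The $3^{2^k}$ family and the Stewart--Tijdeman-type constructions only make $c/\rad(abc)$ grow subpolynomially in $c$. Nothing in your plan produces the power saving $c>\rad(abc)^{1+\varepsilon}$ that (i) and (ii) need. Your diagnosis of why is correct. Apply Conjecture \ref{conj1001.101} with $\varepsilon/2$: any triple with $c>\rad(abc)^{1+\varepsilon}$ then has $\rad(abc)^{\varepsilon/2}<c_{\varepsilon/2}$, so $c$ is bounded. Hence the theorem would refute the $abc$ conjecture.

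\paragraph{The paper's route.} The paper tries to get the power saving by a route unrelated to yours. Put $B=e^{c_0(\log x)^{2/3}(\log\log x)^{4/3}}$ and $x=p^k$ with $p>B$. Take $b=p^k$, let $c$ be a $B$-smooth integer in $[x,x+x^{3/5}]$ (Corollary \ref{cor5757SI.350A}), and set $a=c-b\le x^{3/5}$. Then $\rad(abc)\le x^{3/5+1/k}\rad(c)$. Everything rests on $\rad(c)=x^{o(1)}$. The paper gets this from $\rad(c)\le B^{\omega(c)}$ (Lemma \ref{lem5757SI.335A}) together with Theorem \ref{thm5757SI.875}, which claims that all but $o(h\rho(u))$ of the $B$-smooth integers in the interval have $\omega(c)\le 2\log\log x$.

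\paragraph{Where the paper fails.} Theorem \ref{thm5757SI.875} is false, so the paper does not close your gap either.
\begin{itemize}
\item The error is in Lemma \ref{lem5757SI.835B}. It replaces $\rho(u-\log p/\log y)$ (with $y=B$) by $\rho(u)$ up to an \emph{absolute} error $O(u^{-6}\log p/\log y)$. But $\rho(u)=e^{-u\log u(1+o(1))}$ is far smaller than $u^{-6}$, so the main terms in Lemmas \ref{lem5757SI.875A} and \ref{lem5757SI.875C} carry no information.
\item In fact $\rho(u-t)/\rho(u)$ grows like $(u\log u)^t$ for $0\le t\le 1$. The mean of $\omega$ over $B$-smooth $n$ near $x$ is about $u+\log\log B$, dominated by $u=\log x/\log B$. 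This cannot be avoided: every $B$-smooth $c\ge x$ has $\Omega(c)\ge u$, and $\Omega(c)-\omega(c)$ is typically small. So $B^{\omega(c)}$ is typically close to $B^{u}=x$, and Lemma \ref{lem5757SI.335A} gives nothing.
\item A decisive check shows Theorem \ref{thm5757SI.875} must fail. For squarefree $r$, shift the prime factors of $r$ down to the first $\omega(r)$ primes. This shows that at most $\Psi(2x,2\log 2x)=x^{o(1)}$ integers $n\le 2x$ have $\rad(n)=r$. Hence only $R\,x^{o(1)}$ integers $n\le 2x$ have $\rad(n)\le R$. With $R=B^{2\log\log x}=x^{o(1)}$ this is $x^{o(1)}$ integers in total.
\item By contrast, since $\rho(u)=x^{-o(1)}$ here, Corollary \ref{cor5757SI.350A} plus Theorem \ref{thm5757SI.875} would give $x^{3/5-o(1)}$ such integers inside $[x,x+x^{3/5}]$ alone.
\end{itemize}
The construction in Theorem \ref{thm5757SI.425A} actually only needs $\rad(c)<x^{2/5-\eta}$ for some $\eta>0$. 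By the same count, that is expected to fail in an interval of length $x^{3/5}$: the expected number of such $c$ is $x^{-\eta+o(1)}$.

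\paragraph{Conclusion.} The missing step is exactly the one you isolated, and neither your plan nor the paper's argument supplies it. Theorem \ref{thm5757SI.940} remains unproved, and it is at least as strong as the negation of the $abc$ conjecture.
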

The proof is derived from the theory of smooth integers in short intervals. The fundamental background is covered in \hyperlink{S5757SI-A}{Section} \ref{S5757SI-A} to 
\hyperlink{S5757WW-W}{Section} \ref{S5757WW-W}. The main result is proved in \hyperlink{S5757SI-K}{Section} \ref{S5757SI-K} and \hyperlink{S5757GRP-V}{Section} \ref{S5757GRP-V}.

\section{Basic Theory of Smooth Numbers}\label{S5555SI-S}\hypertarget{S5555SI-S}
A few results in the theory of smooth numbers are stated in this section as a reference. The symbol $P(n)$ represents the largest prime divisor of an integer $n\geq1$ with the convention $P(1)=1$. For $x>y>1$, the parameter $u=\log x/\log y$. For $1<y<x$, an integer $n\geq1$ is called $y$-smooth if all its prime factors $p\mid n$ satisfy $p\leq y$. Define the set of smooth integers by 
\begin{equation}
	S(x,y)=\{n\leq x: P(n)\leq y\}
\end{equation} and 
its complementary set
\begin{equation}
	\overline{S}(x,y)=\{n\leq x: P(n)> y\}.
\end{equation}
The corresponding counting functions are defined by
\begin{equation}
	\Psi(x,y)=\#\{n\leq x: P(n)\leq y\}
\end{equation} and 
its complementary set
\begin{equation}
	\Phi(x,y)=\#\{n\leq x: P(n)> y\}.
\end{equation}
 
\begin{dfn}\label{dfn5555SI.200D}\hypertarget{dfn5555SI.200D}{\normalfont The Dickman function $\rho:[0,\infty]\longrightarrow [0,1]$ is a unique continuous function such that
\begin{equation}
	\rho(u)=
	\begin{cases}
		1&\text{ if } 0\leq u\leq 1,\\	
		1-\log u&\text{ if } 1\leq u\leq 2
	\end{cases}
\end{equation}	
and satisfies the differential equation
\begin{equation}
u\rho^{\prime}(u)-\rho(u-1)=0,
\end{equation}
for $u\geq1$. 	 
	}
\end{dfn}
As usual, the parameter $u$ is defined by
\begin{equation}
	u=\frac{\log x}{\log y},
\end{equation}
and the cardinalities of the sets $S(x,y)$ and its complement $\overline{S}(x,y)$ have the asymptotics 
\begin{equation}
\Psi(x,y)=\Psi(x,x^{1/u})\sim x\rho(	u),
\end{equation} 
and \begin{equation}
	\Phi(x,y)=\Phi(x,x^{1/u})\sim x(1-\rho(	u)).
\end{equation}
Other properties of interest in this analysis are listed here. 
\begin{thm}\label{thm5555SI.200T}\hypertarget{thm5555SI.200T}The function $\rho(u)$ satisfies the folowings.
\begin{enumerate}[font=\normalfont, label=(\roman*)]\label{eq5555SI.100S10}
	\item$\displaystyle \rho(u)>0$,\tabto{9cm}for $u>0$,	
	\item$\displaystyle u\rho(u)=\int_{u-1}^u\rho(t)dt $,\tabto{9cm}for $u\geq 1$,
	\item$\displaystyle \rho(u)\leq \frac{1}{\Gamma(u+1)} $,\tabto{9cm}for $u\geq 0$,
	\item$\displaystyle \rho(u)=e^{-u\left(\log u +\log\log u -1 +o(1)\right) }$, \tabto{9cm}for $u\geq 1$,
	\item$\displaystyle \rho^{\prime}(u)<0$,\tabto{9cm}for $u>1$,	
\end{enumerate}
\end{thm}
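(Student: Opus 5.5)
The statement concerns the Dickman function, and I would derive everything from the delay differential equation and the initial data. One remark first. The explicit formula $\rho(u)=1-\log u$ on $[1,2]$ gives $\rho'(u)=-1/u=-\rho(u-1)/u$. So the equation must be read as $u\rho'(u)+\rho(u-1)=0$ for $u\geq1$; I will use this standard form, since the sign in Definition \ref{dfn5555SI.200D} appears to be a misprint. The order of proof is (ii), then (i), then (v), then (iii), and finally (iv).

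For (ii), set
\begin{equation*}
F(u)=u\rho(u)-\int_{u-1}^{u}\rho(t)\,dt, \qquad u\geq 1.
\end{equation*}
Differentiating gives $F'(u)=\rho(u)+u\rho'(u)-\rho(u)+\rho(u-1)=u\rho'(u)+\rho(u-1)=0$. Also $F(1)=1-\int_0^1 1\,dt=0$, so $F\equiv 0$.

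For (i), suppose $\rho$ vanishes somewhere and let $u_0$ be its first zero. Then $u_0>1$, because $\rho=1$ on $[0,1]$. Since $\rho>0$ on $[u_0-1,u_0)$, identity (ii) gives $u_0\rho(u_0)=\int_{u_0-1}^{u_0}\rho>0$, a contradiction. Part (v) is then immediate: $\rho'(u)=-\rho(u-1)/u<0$ for $u>1$ by (i).

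For (iii), I would argue by induction on the intervals $[k,k+1]$. On $[0,1]$ we have $\Gamma(u+1)\leq 1$, so $\rho(u)=1\leq 1/\Gamma(u+1)$. For $u>1$, the function $\rho$ is non-increasing on $[u-1,u]$ by (v), so (ii) gives $u\rho(u)\leq \rho(u-1)$. Applying the induction hypothesis at $u-1$ then yields
\begin{equation*}
\rho(u)\leq \frac{1}{u\,\Gamma(u)}=\frac{1}{\Gamma(u+1)}.
\end{equation*}

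For (iv), I would follow de Bruijn's saddle-point method.

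\begin{enumerate}[font=\normalfont, label=(\alph*)]
\item Using the equation, compute the Laplace transform
\begin{equation*}
\hat\rho(s)=\int_0^\infty e^{-su}\rho(u)\,du=\exp\Bigl(\gamma+\int_0^{s}\frac{e^{-t}-1}{t}\,dt\Bigr).
\end{equation*}
\item Recover $\rho(u)$ by inverse Laplace transform, with the contour passing through $s=-\xi(u)$, where $\xi=\xi(u)$ is the positive solution of $e^{\xi}=1+u\xi$.
\item Deduce $\rho(u)=\exp\bigl(-u\xi+\int_0^{\xi}\frac{e^t-1}{t}\,dt+O(\log u)\bigr)$.
\item Iterating $e^\xi=1+u\xi$ gives $\xi=\log u+\log\log u+o(1)$. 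Moreover $\int_0^\xi (e^t-1)/t\,dt\sim e^\xi/\xi\sim u$.
\item Combining these yields the exponent $-u(\log u+\log\log u-1+o(1))$.
\end{enumerate}

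If the contour analysis proves too heavy, a more elementary fallback is available. The upper bound follows from comparison with $e^{-u\xi+\dots}$, obtained by showing that this function is a supersolution of the integral equation (ii). A matching lower bound follows from a subsolution argument with the same $\xi$. The main obstacle is this part (iv): either controlling the tails of the inverse Laplace integral away from the saddle point, or building sharp enough sub- and supersolutions so that the error stays at $o(u)$ in the exponent. Parts (i), (ii), (iii) and (v) are routine once the sign convention is fixed.
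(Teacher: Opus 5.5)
The paper does not prove this theorem. It lists (i)--(v) as standard background on the Dickman function and points to a textbook chapter. Your proposal is therefore a genuinely different, self-contained route that derives everything from the delay equation, and it exposes a defect the citation hides.

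Your sign correction is necessary, not cosmetic. As printed, Definition~\ref{dfn5555SI.200D} contradicts its own formula $\rho(u)=1-\log u$ on $[1,2]$. With $u\rho'(u)=\rho(u-1)$ one would get $\rho'>0$ and $F'(u)=2\rho(u-1)>0$, so both (v) and (ii) would be false. With the standard sign $u\rho'(u)+\rho(u-1)=0$, your arguments for (ii), (i), (v) and (iii) are complete and correct, and proving them in that order is the efficient choice. Note also that (iii) is what makes $\hat\rho(s)$ entire, which steps (a)--(b) use without saying so. Compared with the citation, your route makes four of the five items verifiable in a few lines, catches the misprint, and isolates (iv) as the only nontrivial part.

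Part (iv), read as $u\to\infty$, is still a plan rather than a proof. The bookkeeping in (d)--(e) is correct: since $e^{\xi}/\xi=u+1/\xi$, you get $I(\xi):=\int_0^{\xi}(e^t-1)t^{-1}\,dt=u+O(u/\log u)$. But step (c), which carries all the analysis, is only asserted.

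You only need $o(u)$ accuracy in the exponent, so half of (iv) is free. Because $\rho$ is non-increasing,
\begin{equation*}
e^{\gamma+I(\xi)}=\hat\rho(-\xi)\geq\int_{u-1}^{u}e^{\xi t}\rho(t)\,dt\geq e^{\xi(u-1)}\rho(u),
\end{equation*}
that is, $\rho(u)\leq\exp\bigl(-u\xi+I(\xi)+O(\log u)\bigr)$, with no contour integration. The constant $e^{\gamma}$ in (a) plays no role at this precision.

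The lower bound is the real content, and you have two options.
\begin{enumerate}[font=\normalfont, label=(\alph*)]
\item Carry out the tail estimates for the inversion integral.
\item Prove $\eta(t)\leq\xi(t)+O(1/t)$, where $\eta(t):=-\rho'(t)/\rho(t)=\rho(t-1)/(t\rho(t))$. Since $\frac{d}{du}\bigl(u\xi(u)-I(\xi(u))\bigr)=\xi(u)$, integrating this bound gives $\rho(u)\geq\exp\bigl(-u\xi+I(\xi)-O(\log u)\bigr)$.
\end{enumerate}
Such a bound on $\eta$ follows directly from (ii) if you also know that $\eta$ is non-decreasing, i.e.\ that $\rho$ is log-concave. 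Then $t\rho(t)\geq\rho(t)\int_0^1e^{v\eta(t-1)}\,dv$, which forces $\eta(t-1)\leq\xi(t)$, and $\xi(t+1)-\xi(t)=O(1/t)$. Log-concavity itself would need its own argument.
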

An elementary introduction to smooth numbers appears in {\color{red}\cite[Chapter 9]{DL2012}}.

The smooth numbers counting function has the asymptotic formula

\begin{thm}\label{thm5555SI.400D}\hypertarget{thm5555SI.400D} For any fixed $\varepsilon>0$ the relation
	\begin{equation}
		\Psi(x,y)=x\rho(u)\left(1+O\left( \frac{\log(u+1)}{\log y}\right)  \right) 
	\end{equation} holds uniformly (with $u=\log x/\log y$) in
	the range
	\begin{equation}
		y\geq2 \quad \text{ and }\quad 1\leq u\leq (\log y)^{3/5} .
	\end{equation}
\end{thm}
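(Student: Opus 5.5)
The plan is to follow Hildebrand's method: compare a functional identity satisfied by $\Psi$ with the integral equation for $\rho$ in Theorem \ref{thm5555SI.200T}(ii), and propagate the error by induction on $u$. Write $\Psi(x,y)=x\rho(u)\bigl(1+\Delta(x,y)\bigr)$. The aim is to show
$$|\Delta(x,y)|\ll \frac{\log(u+1)}{\log y}$$
uniformly for $1\le u\le(\log y)^{3/5}$.

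\textbf{Base case.} For $u\le 2$ one has $\Psi(x,y)=\lfloor x\rfloor-\sum_{y<p\le x}\lfloor x/p\rfloor$. By Mertens' theorem with the prime number theorem error term, this gives
$$\Psi(x,y)=x\bigl(1-\log u\bigr)+O\!\left(\frac{x}{\log y}\right),$$
which is the claim, since $\rho(u)\asymp 1$ on $[1,2]$ by Definition \ref{dfn5555SI.200D}.

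\textbf{Inductive step.} This rests on Hildebrand's identity, obtained by summing $\log n=\sum_{p^m\| n}\log p^m$ over $n\in S(x,y)$:
$$\Psi(x,y)\log x=\int_1^x\frac{\Psi(t,y)}{t}\,dt+\sum_{\substack{p^m\le x\\ p\le y}}\log p\;\Psi\!\left(\frac{x}{p^m},y\right).$$
The steps are as follows.
\begin{enumerate}
\item Discard the prime powers with $m\ge2$ at negligible cost.
\item Write the prime sum as a Stieltjes integral against $\theta(t)=t+R(t)$, where the Vinogradov--Korobov form of the prime number theorem gives $R(t)\ll t\exp\bigl(-c(\log t)^{3/5}(\log\log t)^{-1/5}\bigr)$.
\item Evaluate the main term $t$. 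Substituting $\Psi=x\rho(1+\Delta)$ and using $u\rho(u)=\int_{u-1}^u\rho$, the main terms cancel exactly. What remains expresses $\Delta(x,y)$ as a weighted average of $\Delta(x/p,y)$ over $p\le y$, that is, over parameters $u'\in[u-1,u]$, with weights $\rho(u-v)/(u\rho(u))$.
\item Add the error terms coming from the integral $\int\Psi(t,y)dt/t$ and from $R$.
\end{enumerate}

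\textbf{Closing the induction.} The induction runs over $u$ in steps of size $1/\log y$, or equivalently over dyadic ranges of $x$. The weights in the averaged expression have total mass $1+O(1/(u\log y))$. This yields a recursion of the form
$$\sup_{u'\le u}|\Delta|\le\Bigl(1+O\bigl(\tfrac{1}{u\log y}\bigr)\Bigr)\sup_{u'\le u-1/\log y}|\Delta|+E(u,y).$$
Iterating over the $O(u\log y)$ steps multiplies the error by at most a factor $u^{O(1)}$ in the harmonic sense, so the bound for $\Delta$ comes out with the $\log(u+1)$ factor. The claimed bound then follows once $E(u,y)\ll 1/(u\log y)$ summed appropriately.

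\textbf{Main obstacle.} The hard part is controlling the contribution of $R(t)$. After dividing by $\rho(u)$, this term carries ratios $\rho(u-v)/\rho(u)$. By Theorem \ref{thm5555SI.200T}(iv) these grow like $e^{v\xi(u)}$ with $\xi(u)\sim\log u$, and after integration by parts this yields a contribution of order roughly $(\log y)^{-1}u\log u\cdot\exp\bigl(-c(\log y)^{3/5-o(1)}\bigr)\cdot y^{\xi(u)/\log y}$. Showing this is $o\bigl(1/\log y\bigr)$ requires $u\log u$ to stay below $(\log y)^{3/5}$ up to the $\log\log$ factors. This is exactly where the range $u\le(\log y)^{3/5}$ enters, and I would check this estimate carefully, possibly accepting a range $(\log y)^{3/5-\epsilon}$ if the $\log\log$ loss in Vinogradov--Korobov cannot be absorbed.
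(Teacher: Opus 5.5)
The paper proves nothing here; it only cites Hildebrand's theorem. Hildebrand's actual range is $1\le u\le\exp\{(\log y)^{3/5-\varepsilon}\}$, which is why the $\varepsilon$ is idle, and the stated $u\le(\log y)^{3/5}$ is a small subrange. Your setup is the right one: Hildebrand's identity, exact cancellation of main terms via $u\rho(u)=\int_{u-1}^u\rho$, and induction on $u$. But the step ``closing the induction'' fails as written.

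With mesh $1/\log y$ you iterate $\asymp u\log y$ times. The factors $1+O(1/(u_j\log y))$ then multiply to $\exp\bigl(O(\sum_j 1/(u_j\log y))\bigr)=u^{O(1)}$. That is a polynomial loss, and it cannot turn into a $\log(u+1)$ factor. The additive error is worse. In each equation it has relative size $\asymp 1/(u\log y)$: this is the term $\int_1^x\Psi(t,y)\,dt/t$ (and the prime powers) measured against $\Psi(x,y)\log x$. It is a genuine discrepancy between the functional equations of $\Psi$ and $x\rho(u)$, so it cannot be removed. Charged once per mesh step, it sums to $\sum_j 1/(u_j\log y)\asymp\log u$. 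Your scheme therefore gives at best $\Delta\ll u^{O(1)}\log(u+1)$, which misses the target by more than a factor $\log y$.

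The missing idea is to charge the error per unit of delay, using the shape of the kernel. Because $\rho$ is nonincreasing, the weights $\rho(u-v)/(u\rho(u))$ on $[0,1]$ have mass exactly $1$ and put mass at most $1/2$ on $[0,1/2]$. Set $D(u)=\sup_{1\le u'\le u}|\Delta(y^{u'},y)|$. Bound the $[0,1/2]$ part, together with the $O(1/(u\log y))$ self-referential mass from $\int\Psi(t,y)\,dt/t$, by a multiple of $D(u)$ and absorb it on the left. The $[1/2,1]$ part only involves $D(u-\frac12)$. This gives
\[
D(u)\le\Bigl(1+O\Bigl(\frac{1}{u\log y}\Bigr)\Bigr)D\bigl(u-\tfrac12\bigr)+O\Bigl(\frac{1}{u\log y}\Bigr)+E_{\mathrm{PNT}}(u).
\]
Now only $O(u)$ steps are needed. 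The factors multiply to $\exp(O(\log u/\log y))=O(1)$, and the errors sum to $O(\log(u+1)/\log y)$, starting from your base case.

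Two smaller points. First, your ``main obstacle'' paragraph conflates $u$ with $\log u$. Your own displayed bound is $o(1/\log y)$ as soon as $(u\log u)^2\le\exp\bigl(c(\log y)^{3/5-o(1)}\bigr)$, i.e.\ for $u\le\exp\{(\log y)^{3/5-\epsilon}\}$; that is exactly where Hildebrand's range comes from. In the stated range the prime number theorem term is harmless, since the de la Vall\'ee Poussin error term already suffices, so there is no need to shrink the range. Second, part (iv) of Theorem \ref{thm5555SI.200T} cannot give $\rho(u-v)\le\rho(u)e^{v\xi(u)}$: the $o(1)$ in its exponent costs a factor $e^{o(u)}$. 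You need the standard bound proved directly from the differential-delay equation.
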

\begin{proof}[\textbf{Proof}] The result is {\color{red}\cite[Theorem 1.1]{HA1993}}, a slightly different form of this asymptotic formula appears in {\color{red} \cite[Theorem 9.14]{DL2012}}.
\end{proof}

Surprisingly, this result proved in 1993 seems to imply the latest results for short intervals discussed in \hyperlink{S5757SI-A}{Section} \ref{S5757SI-A}.

\section{Smooth Integers in Short Intervals}\label{S5757SI-A}\hypertarget{S5757SI-A}
The topic of smooth integers in short intervals is a major area of research in the theory of smooth integers. 


\begin{thm}\label{thm5757SI.450}\hypertarget{thm5757SI.450} Let $17/30 < \theta \leq 1$. There exists a constant $C = C(\theta) > 0$ such that the estimate
\begin{equation}
\frac{\Psi(x+h,y)-\Psi(x,y)}{h} =\frac{\Psi(x,y)}{x}\left( 1+O_{\theta}\left(\frac{\log(u+1) }{\log y} \right) \right) 
\end{equation}
holds uniformly for

\begin{equation}
	x^{\theta}\leq h\leq x\quad \text{ and }\quad e^{C(\log x)^{2/3}(\log\log x)^{4/3}} \leq y \leq 2x.
\end{equation}
\end{thm}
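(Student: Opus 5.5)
The plan is to compare short-interval counts with long-interval counts through Perron's formula applied to the partial Euler product
\[
\zeta(s,y)=\prod_{p\leq y}\left(1-p^{-s}\right)^{-1}=\sum_{P(n)\leq y}n^{-s},
\]
following Hildebrand's analytic method. I expect the constant $17/30$ and the lower bound on $y$ to enter at two separate steps.

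\textbf{Step 1: Perron's formula.} Let $\alpha=\alpha(x,y)$ be the saddle point, i.e.\ the solution of $\sum_{p\leq y}\log p/(p^{\alpha}-1)=\log x$. For $y\geq e^{(\log x)^{2/3+o(1)}}$ we have $1-\alpha\asymp \log(u\log u)/\log y$, which is small. Perron's formula on the line $\Re s=\alpha$ gives
\[
\Psi(x+h,y)-\Psi(x,y)=\frac{1}{2\pi i}\int_{\alpha-iT}^{\alpha+iT}\zeta(s,y)\,\frac{(x+h)^s-x^s}{s}\,ds+\text{error}.
\]
The range $|t|\leq T_0=x/h\cdot(\log x)^{A}$ is where $((x+h)^s-x^s)/s\approx h x^{s-1}$. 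There the integral reproduces $(h/x)$ times the corresponding long-interval integral. By Theorem \ref{thm5555SI.400D} and its saddle-point refinement, that long-interval integral equals $\Psi(x,y)(1+O(\log(u+1)/\log y))$. It therefore remains to show that the contribution of $T_0\leq|t|\leq T$ is $o(h\Psi(x,y)/x)$, with the stated relative error. Since $\Psi(x,y)\approx x^{\alpha}\zeta(\alpha,y)/(\alpha\sqrt{2\pi\sigma_2})$, this amounts to a saving of $|\zeta(\alpha+it,y)|/\zeta(\alpha,y)$ beyond a power of $h/x$.

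\textbf{Step 2: the large-$t$ range.} Factor $\zeta(s,y)=\zeta(s)\exp\bigl(-\sum_{p>y}\log(1-p^{-s})^{-1}\bigr)$ in spirit. Equivalently, and more usefully, write
\[
\log\zeta(s,y)=\sum_{n\leq y}\Lambda(n)n^{-s}/\log n+\text{small},
\]
and bound the prime sum $\sum_{p\leq y}p^{-\alpha-it}$ via the explicit formula. Two inputs control this sum. First, the Vinogradov--Korobov zero-free region $\sigma>1-c(\log t)^{-2/3}(\log\log t)^{-1/3}$ gives the saving $|\zeta(\alpha+it,y)|\leq \zeta(\alpha,y)\exp(-c' u\,\cdot)$ for $|t|\leq y^{c}$. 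This is exactly where the hypothesis $y\geq\exp(C(\log x)^{2/3}(\log\log x)^{4/3})$ is needed, since it makes the zero-free region beat $1-\alpha$. Second, a zero-density estimate $N(\sigma,T)\ll T^{A(1-\sigma)}(\log T)^{B}$, applied in mean-square form (Hal\'asz--Montgomery / large values of Dirichlet polynomials), controls $\int_{T_0}^{T}|\zeta(\alpha+it,y)|\,dt/|t|$ rather than pointwise values.

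\textbf{Step 3 (main obstacle): the exponent $\theta$.} With Huxley's density exponent $A=12/5$, the method yields $\theta>7/12$. To reach $17/30$ I would decompose $\zeta(s,y)$, or rather the indicator of smooth numbers, as a product of several Dirichlet polynomials of flexible lengths. This uses the fact that a $y$-smooth number factors into pieces of any prescribed size up to a factor $y$. Then one applies the mean value theorem together with Heath-Brown/Jutila-type large value estimates to the pieces separately, choosing the lengths to optimise the exponent; this flexibility is what should beat the prime case. I expect the bookkeeping of this factorisation, and the verification that every configuration of lengths is covered when $\theta>17/30$, to be the hardest part. The remaining terms (Perron truncation, $|t|\leq T_0$ smoothing, relative error $O(\log(u+1)/\log y)$ inherited from Theorem \ref{thm5555SI.400D}) should be routine.
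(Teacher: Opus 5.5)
The paper gives no argument for this statement; it quotes [YK2024, Theorem 1.1]. Judged on its own, your outline has a genuine gap, and it sits exactly where you locate the main obstacle.

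\textbf{Step 3: where $17/30$ comes from.} The exponent $17/30$ is $1-13/30$. It is the threshold produced by the Guth--Maynard density bound $N(\sigma,T)\ll T^{30(1-\sigma)/13+o(1)}$ (2024), in the same way that Huxley's exponent $12/5$ gives $7/12$. You propose instead to get below $7/12$ by factorising smooth numbers into flexible pieces and using Heath-Brown/Jutila-type large values estimates. That cannot work, because the theorem is asserted for all $y\le 2x$.

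For $x^{2/3}\le y\le x$, an integer of $(x,x+h]$ fails to be $y$-smooth exactly when it is $mp$ with a single prime $p>y$, so there is no flexibility at all. Take $y_1<y_2=e^{K}y_1$ with $y_2$ close to $x$, and subtract the two instances of the statement. The relative error $O(1/\log x)$ then forces
\[
\sum_{x/y_2<m\le x/y_1}\Bigl(\pi\bigl(\tfrac{x+h}{m}\bigr)-\pi\bigl(\tfrac{x}{m}\bigr)\Bigr)=\frac{hK}{\log x}\bigl(1+O(1/K)\bigr).
\]
This is a prime number theorem in intervals of length $\asymp X^{\theta}$ at height $X=x/m$. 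It is averaged over only boundedly many scales, and such an average gives no power saving in a Dirichlet polynomial analysis. The classical large values estimates (Huxley, Heath-Brown, Jutila) are exactly what yield $12/5$. No fixed exponent below $7/12$ was known for this before Guth--Maynard. The missing ingredient is their new large values estimate; used in place of Huxley's bound in a Hildebrand-type argument, it produces $17/30$ with no extra factorisation.

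\textbf{Step 2: absolute values on the saddle-point line.} Your explanation of the lower bound on $y$ is fine, but this step fails as formulated. To handle $T_0\le|t|\le T$ on the line $\Re s=\alpha$ by absolute values, you need
\[
\int_{T_0}^{T}|\zeta(\alpha+it,y)|\,dt/t\ll (h/x)\,\zeta(\alpha,y)
\]
up to logarithmic factors. In this range of $y$, however, $\sum_{P(n)\le y}n^{1-2\alpha}=\zeta(2\alpha-1,y)=x^{o(1)}$. The Montgomery--Vaughan mean value theorem therefore applies and gives
\[
\int_{T_0}^{2T_0}|\zeta(\alpha+it,y)|^2\,dt\sim T_0\,\zeta(2\alpha,y)\asymp T_0.
\]
The fourth moment is likewise $\asymp T_0$, and H\"older then gives $\int_{T_0}^{2T_0}|\zeta(\alpha+it,y)|\,dt\gg T_0$. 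So the saving over the trivial bound is at most about $1/\zeta(\alpha,y)\ge e^{-O(u)}/\log y=x^{-o(1)}$, not the required $x^{-(1-\theta)}$.

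The Vinogradov--Korobov input saves only a factor $e^{-cu}$, and no density argument applied to $|\zeta(\alpha+it,y)|$ can do better. The power saving has to come from the oscillation of $x^{it}$. That means rewriting the count, for example via a Hildebrand/Buchstab-type identity, in terms of primes in the intervals $(x/m,(x+h)/m]$ summed over suitable $m$, and then using the explicit formula. Only at that stage do the factors $(x/m)^{\beta-1}$ from zeros $\beta+i\gamma$ with $|\gamma|\lesssim x/h$ appear, so that a zero-density estimate can actually be applied.
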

\begin{proof}[\textbf{Proof}] The result is {\color{red}\cite[Theorem 1.1]{YK2024}}.
\end{proof}

Another result for shorter intervals is stated here.

\begin{thm}\label{thm5757SI.400}\hypertarget{thm5757SI.400} For any $ \varepsilon > 0$, there exists a positive constant $ c_0 = c_0(\varepsilon)$ such that the following holds. If $x$ is large in terms of $\varepsilon$,
\begin{equation}
	\exp\left( c_0 (\log x)^{2/3} (\log \log x)^{4/3} \right) \leq y \leq x^{\frac{1}{c_0}},
\end{equation}
	and
\begin{equation}
	h \geq \sqrt{x} \exp\left( (1 + \epsilon) \left( \frac{11}{16}u \log u + 2 \log \log x \right) \right),
\end{equation}
where $u = \frac{\log x}{\log y} $, then the interval $ [x, x+h] $ contains a $ y$-smooth number.
\end{thm}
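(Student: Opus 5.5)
This result is a deep input from the literature and is not derived from the earlier results here. My plan is to invoke it from the relevant source, in the same way that Theorem \ref{thm5757SI.450} is quoted from \cite{YK2024}. Since the proposal should still say how one would prove it, I sketch below the standard Dirichlet-polynomial strategy. I have not checked that this strategy reproduces the precise exponent $\tfrac{11}{16}u\log u$.

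The first step is to replace $y$-smooth numbers by a structured subset that is easier to detect in $[x,x+h]$. One considers products $n=m\,p_1\cdots p_k$. Here each $p_i$ is a prime in a dyadic range $[P,2P]$ with $P\le y$, and $m$ runs over $y$-smooth integers in a range $M$ with $MP^k\asymp x$. The number $k$ of factors is tied to $u=\log x/\log y$, and the lengths $M,P$ are chosen to balance the later estimates. It then suffices to show that the weighted count
\begin{equation*}
\sum_{x<m p_1\cdots p_k\le x+h} 1
\end{equation*}
is positive. By Perron's formula this count equals $\frac{1}{2\pi i}\int M(s)P(s)^k\,\frac{(x+h)^s-x^s}{s}\,ds$ on the line $\Re s=1/2$, with $M(s)=\sum_m m^{-s}$ and $P(s)=\sum_p p^{-s}$. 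Cutting the integral at height $T\approx x/h$, the range $|t|\le T_0$ gives the expected main term. That main term is $\gg h\,\Psi(M,y)P^{k}/(x\log^kP)$, with the density of smooth $m$ supplied by Theorem \ref{thm5555SI.400D} in the stated range of $y$.

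The second step is to bound the contribution of $T_0<|t|\le T$. I would split the $t$-range according to the sizes of $|P(1/2+it)|$ and $|M(1/2+it)|$. On the set where $P(s)$ is not small, one applies large-values estimates: the Halász--Montgomery inequality together with Huxley-type bounds. Elsewhere one uses the mean value theorem for Dirichlet polynomials, possibly after raising $P(s)$ to a suitable power so that its length is near $x^{1/2}$. The exponent $\sqrt x$ in the hypothesis on $h$ comes from this mean-value step. The lower bound $y\ge \exp(c_0(\log x)^{2/3}(\log\log x)^{4/3})$ is exactly the Vinogradov--Korobov zero-free region. That region is needed to make $P(1/2+it)$ small in the intermediate range of $t$, and this is why the same threshold appears in Theorem \ref{thm5757SI.450}.

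The main obstacle is this large-values step. The error terms must be beaten by the main term, and the main term is smaller than $h/x$ by a factor comparable to $\rho(u)\approx e^{-u\log u}$. One must therefore show that the exceptional $t$-set is small enough that the accumulated loss is only $\exp\bigl((1+\varepsilon)\tfrac{11}{16}u\log u\bigr)$. I would first try an optimized choice of $k$ together with the $(\sigma,T)$ large-values exponents that give density-type savings. The constant $11/16$ should emerge from this optimization, and the factor $(\log x)^{2(1+\varepsilon)}$ should absorb the dyadic decompositions and the logarithmic losses. Positivity of the weighted count then gives a $y$-smooth number in $[x,x+h]$.
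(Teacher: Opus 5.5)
Your plan matches the paper's: the paper gives no argument of its own and simply quotes this statement as \cite[Theorem 1.2]{SJ2025}, which is the citation-based treatment you propose, though you should name that source explicitly rather than refer to ``the relevant source''. Your Dirichlet-polynomial sketch goes beyond anything in the paper and, as you acknowledge, has not been checked to produce the constant $\tfrac{11}{16}$, so present it as heuristic motivation rather than as part of the proof.
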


\begin{proof}[\textbf{Proof}] The result is {\color{red}\cite[Theorem 1.2]{SJ2025}}.
\end{proof}
\begin{cor}\label{cor5757SI.350A}\hypertarget{cor5757SI.350A} Let $\theta=3/5$ and let $x>1$ be a sufficiently large real number. Set the parameters
\begin{align}
B&=e^{c_0(\log x)^{2/3}(\log\log x)^{4/3}}\\
h&=x^{3/5},
\end{align} 
where $c_0=c_0(\theta)>0$ is a constant. Then the short interval
	\begin{equation}\label{eq5757SI.350A02}
	[\;	x, \;	x+h\;]
	\end{equation}
	contains 
	\begin{equation}\label{eq5757SI.350A04}
\Psi(x+h,B)-\Psi(x,B)=x^{3/5}e^{-(\log x)^{1/3+\beta}}(1+o(1)) 
\end{equation}	
$B$-smooth integers, for some small number $\beta>0$.
\end{cor}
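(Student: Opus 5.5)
The plan is to specialize Theorem \ref{thm5757SI.450} to $\theta=3/5$ and $y=B$, replace $\Psi(x,B)$ by its Dickman-function asymptotic via Theorem \ref{thm5555SI.400D}, and then evaluate $\rho(u)$ with Theorem \ref{thm5555SI.200T}(iv).

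\textbf{Step 1: short-interval count.} Since $17/30<3/5\le 1$, Theorem \ref{thm5757SI.450} applies with $\theta=3/5$ and $h=x^{3/5}$. Take $c_0=C(\theta)$, so $B=e^{c_0(\log x)^{2/3}(\log\log x)^{4/3}}$ lies at the lower edge of the admissible range, and $B\le 2x$ trivially. This gives
\begin{equation*}
\Psi(x+h,B)-\Psi(x,B)=x^{3/5}\,\frac{\Psi(x,B)}{x}\left(1+O\!\left(\frac{\log(u+1)}{\log B}\right)\right).
\end{equation*}
The error term is $o(1)$, since $\log(u+1)\ll\log\log x$ while $\log B\gg(\log x)^{2/3}$.

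\textbf{Step 2: Dickman asymptotic.} Here
\begin{equation*}
u=\frac{\log x}{\log B}=\frac{(\log x)^{1/3}}{c_0(\log\log x)^{4/3}}
\quad\text{and}\quad
(\log B)^{3/5}\asymp(\log x)^{2/5}(\log\log x)^{4/5}.
\end{equation*}
Hence $1\le u\le(\log B)^{3/5}$ for large $x$, and Theorem \ref{thm5555SI.400D} gives $\Psi(x,B)/x=\rho(u)(1+o(1))$. Combining with Step 1, the count equals $x^{3/5}\rho(u)(1+o(1))$.

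\textbf{Step 3: size of $\rho(u)$ (the main obstacle).} By Theorem \ref{thm5555SI.200T}(iv), $\rho(u)=\exp\!\big(-u(\log u+\log\log u-1+o(1))\big)$. Here $\log u=\tfrac13\log\log x+O(\log\log\log x)$, so
\begin{equation*}
u(\log u+\log\log u-1+o(1))=\frac{(\log x)^{1/3}}{3c_0(\log\log x)^{1/3}}\,(1+o(1)).
\end{equation*}
To put this in the form of \eqref{eq5757SI.350A04}, define $\beta$ by setting the exponent equal to $(\log x)^{1/3+\beta}$:
\begin{equation*}
\beta=\frac{\log\big(u(\log u+\log\log u-1+o(1))\big)}{\log\log x}-\frac13
=O\!\left(\frac{\log\log\log x}{\log\log x}\right),
\end{equation*}
so $|\beta|$ is small for large $x$. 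The $(1+o(1))$ factor from Steps 1 and 2 is carried outside the exponential, which yields \eqref{eq5757SI.350A04}.

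The delicate point is the sign of $\beta$. The computation gives $(\log x)^{1/3}(\log\log x)^{-1/3}$ up to constants, which corresponds to a slightly negative $\beta$ depending on $x$. I would first check whether the author's $\beta$ is meant only as an $x$-dependent exponent of size $o(1)$. If a fixed $\beta>0$ is required, the clean result available is the lower bound
\begin{equation*}
\Psi(x+h,B)-\Psi(x,B)\ge x^{3/5}e^{-(\log x)^{1/3+\beta}}
\end{equation*}
for every fixed $\beta>0$ and all large $x$. This follows from Steps 1 to 3, because $(\log x)^{1/3}(\log\log x)^{-1/3}=o\big((\log x)^{1/3+\beta}\big)$. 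This lower bound is what the later application needs, namely that the interval contains many $B$-smooth integers.
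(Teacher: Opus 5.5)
Your argument follows the paper's route: Theorem \ref{thm5757SI.450} with $\theta=3/5$ and $y=B$, then $\Psi(x,B)/x=\rho(u)(1+o(1))$, then Theorem \ref{thm5555SI.200T}(iv). It is more careful than the paper in Step 2, where you check $u\le(\log B)^{3/5}$ before invoking Theorem \ref{thm5555SI.400D}; the paper passes from $\Psi(x,y)/x$ to $\rho(u)$ without stating this.

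Your sign analysis in Step 3 is correct, and it exposes an error in the statement itself rather than a gap in your proof. The paper only asserts that $e^{-u(\log u+\log\log u-1+o(1))}=e^{-(\log x)^{1/3+\beta}}$ with $\beta>0$. In fact the exponent is $\frac{(\log x)^{1/3}}{3c_0(\log\log x)^{1/3}}(1+o(1))$. The paper's own remark \eqref{eq5757SI.350A0} misprints the power of $\log\log x$ as $4/3$, but either version is $o((\log x)^{1/3})$. This forces $\beta=-(1+o(1))\frac{\log\log\log x}{3\log\log x}<0$.

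Hence \eqref{eq5757SI.350A04} cannot hold with any $\beta>0$ once $x$ is large. For a fixed $\beta>0$, its right-hand side undercounts the true number of $B$-smooth integers by a factor tending to infinity. Your two repairs are exactly what the argument yields: an $x$-dependent $\beta\to 0^-$ that absorbs the whole exponent, or the one-sided lower bound for each fixed $\beta>0$. The only later use of the corollary, in Theorem \ref{thm5757SI.425A}, needs no more than the existence of a $B$-smooth $c$ in $[x,x+x^{3/5}]$, so either repair suffices there.
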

\begin{proof}[\textbf{Proof}] The parameters 
\begin{fleqn}[0pt]	
	\begin{align}\label{eq5757SI.350A06}
	B=	y&=	\exp\left( c_0 (\log x)^{2/3} (\log \log x)^{4/3} \right),\\[.3cm]	
		h&=x^{3/5}> x^{17/30},\\[.3cm]	
		u&=\frac{\log x}{\log y}=\frac{\log x}{c_0 (\log x)^{2/3} (\log \log x)^{4/3}}=	\frac{(\log x)^{1/3}}{c_0(\log \log x)^{4/3}},\\[.3cm]	
		\rho(u)&=e^{-u\left(\log u +\log\log u -1 +o(1)\right) }=e^{-(\log x)^{1/3+\beta}},
	\end{align}
\end{fleqn}	
lie in the admissible range of \hyperlink{thm5757SI.450}{Theorem} \ref{thm5757SI.450}. The properties of Dickman function $\rho(u)$ are described in \hyperlink{S5555SI-S}{Section} \ref{S5555SI-S}. Thus, the short interval
\begin{equation}\label{eq5757SI.350A20}
[x,x+h]=\left[\;	x, \;	x+x^{3/5}\;\right]
\end{equation}
contains
\begin{eqnarray}\label{eq5757SI.350A22}
	\frac{\Psi(x+h,y)-\Psi(x,y)}{h} &=&\frac{\Psi(x,y)}{x}\left( 1+O_{\theta}\left(\frac{\log(u+1) }{\log y} \right) \right) \nonumber\\[.2cm]
&=&\rho(u)\left( 1+O_{\theta}\left(\frac{1 }{(\log x)^{2/3}(\log \log x)^{1/3}} \right) \right)
\end{eqnarray}
$B$-smooth integers. The exponentially large short interval $[x,x+x^{3/5}]$ contains a substantial number of $B$-smooth integers. Specifically, \eqref{eq5757SI.350A22} reduces to
\begin{eqnarray}\label{eq5757SI.350A28}
	\Psi(x+h,B)-\Psi(x,B)&=&x^{3/5}\rho(u)\left( 1+O_{\theta}\left(\frac{1 }{(\log x)^{2/3}(\log\log x)^{1/3}} \right) \right) \nonumber\\[.2cm]
	& =&x^{3/5}e^{-(\log x)^{1/3+\beta}}(1+o(1)).
\end{eqnarray}	 
This completes the proof.
\end{proof}
Observe that
\begin{equation}\label{eq5757SI.350A0}
	\rho(u)=e^{-\frac{(\log x)^{1/3}}{3c_0(\log \log x)^{4/3}}(1+o(1))}=e^{-(\log x)^{1/3+\beta}}\to 0 \text{ as } x\to \infty,
\end{equation}
but the short interval has exponential size.
\begin{lem}\label{lem5757SI.335A}\hypertarget{lem5757SI.335A} Let $\theta =3/5$ and let $x>1$ be a large real number. Set
	\begin{equation}
		 B=e^{	 c_0 (\log x)^{2/3} (\log \log x)^{4/3}} ,
	\end{equation}
where $c_0=c_0(\theta)>0$ is a suitable constant. Let $N$ be a $B$-smooth integer such that 
\begin{equation}\label{eq1001.335A00}
N\in \left[\;	x, \;	x+x^{3/5}\;\right].
\end{equation}
Assume that $\omega(N)\leq 2\log \log x$, then the radical satisfies the inequality
\begin{equation}\label{eq1001.335A02}
\rad(N)=\prod_{p\mid N}p\leq 	e^{ 2c_0 (\log x)^{2/3} (\log \log x)^{7/3}}.
\end{equation}
\end{lem}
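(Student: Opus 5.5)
The bound follows directly from the definition of the radical, the smoothness of $N$, and the hypothesis on $\omega(N)$. Since $N$ is $B$-smooth, every prime $p\mid N$ satisfies $p\leq B$. The radical $\rad(N)=\prod_{p\mid N}p$ has exactly $\omega(N)$ factors, so
\begin{equation*}
\rad(N)\leq B^{\omega(N)}\leq B^{2\log\log x}.
\end{equation*}
The first step is to record this inequality. The location of $N$ in the short interval $[x,x+x^{3/5}]$ plays no role in this step. It only guarantees, via Corollary \ref{cor5757SI.350A}, that such $N$ exist.

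The second step substitutes $B=e^{c_0(\log x)^{2/3}(\log\log x)^{4/3}}$, which gives
\begin{equation*}
B^{2\log\log x}=\exp\left(2c_0(\log x)^{2/3}(\log\log x)^{4/3}\cdot\log\log x\right)=e^{2c_0(\log x)^{2/3}(\log\log x)^{7/3}}.
\end{equation*}
This is exactly the bound \eqref{eq1001.335A02}. The exponent $7/3=4/3+1$ comes from the extra factor $\log\log x$ contributed by the bound on $\omega(N)$.

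There is no genuine obstacle, since everything is a one-line estimate once we use $\omega(N)\leq 2\log\log x$. The only point needing care is whether that assumption is legitimate. It is a hypothesis of the lemma, not something proved here. If one wanted to verify it for typical $N$ in the interval, one would appeal to a Hardy--Ramanujan or Tur\'an--Kubilius type bound restricted to smooth numbers. That would be the hard part, but it is not required for the statement as given.

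A cruder fallback avoids the hypothesis. Use $\omega(N)\ll \log N/\log\log N$, which gives only $\rad(N)\leq B^{O(\log x/\log\log x)}$. This is far too weak, which explains why the lemma assumes the $\omega$ bound explicitly.
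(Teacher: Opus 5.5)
Your proof is correct and matches the paper's: bounding each of the $\omega(N)$ distinct prime factors by $B$ gives $\rad(N)\le B^{\omega(N)}\le B^{2\log\log x}$, and substituting $B$ yields the stated bound (you correctly keep $\omega(N)\le 2\log\log x$ as an inequality, where the paper writes an equality). One caution about your aside: this hypothesis is not a typical property that a Tur\'an--Kubilius argument could supply, since counting choices of at most $2\log\log x$ primes $p\le B$ with exponents at most $\log_2(2x)$ shows that only $x^{o(1)}$ integers $N\le 2x$ are $B$-smooth with $\omega(N)\le 2\log\log x$, whereas $[x,x+x^{3/5}]$ is expected to contain $x^{3/5-o(1)}$ $B$-smooth numbers, so the lemma is sound only because it assumes the bound on $\omega(N)$.
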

\begin{proof}[\textbf{Proof}] By hypothesis, the number of prime divisors 
	\begin{equation}\label{eq1001.335A04}
p_1<p_2<\;\cdots \;<p_w
\end{equation}	
	of the smooth integer $N$ is  
	\begin{equation}\label{eq1001.335A06}
w=\omega(N)=2\log \log x.
	\end{equation}
Now, by definition of the radical and since every prime divisor $p_i$ of 
$N$ is at most $p_i\leq B$,
\begin{eqnarray}\label{eq1001.335A08}
	\rad(N)&=&\prod_{p\mid N}p\nonumber\\[.2cm]
	&\leq&p_1\cdot p_2\;\cdots \;p_w \nonumber\\[.2cm]
	&\leq &B^{\omega(N)}.
\end{eqnarray}
Accordingly, radical satisfies the inequality
\begin{eqnarray}\label{eq1001.335A12}
	\rad(N)&=&\prod_{p\mid N}p\nonumber\\[.2cm]
	&\leq & \left( e^{	 c_0 (\log x)^{2/3} (\log \log x)^{4/3}}\right)^{2\log \log x}  \nonumber\\[.2cm]
	&\leq &	e^{2c_0(\log x)^{2/3}(\log \log x)^{7/3}}.
\end{eqnarray}
This completes the verification.
\end{proof}

\section{Density of Subsets of Smooth Integers}\label{S5757SID-W}\hypertarget{S5757SID-W}
A Turan-type argument, as in the proof of {\color{red}\cite[Theorem]{TP1934}}, is utilized here to develop a simple verification that the $B$-smooth integers on the interval $[x,x^{3/5}]$ with an extreme number of prime factors has zero density on this interval. Similar result can be obtained via the Turan-Kubilus inequality.

\begin{lem}\label{lem5757SI.835B}\hypertarget{lem5757SI.835B}  Let $x>0$ be a large real number and let $y=c_0(\log x)^{2/3}(\log\log x)^{4/3}$, where $c_0>0$ is a suitable constant. If $u>u_0\geq0$, then
	\begin{equation}\label{eq5757SI.835B00}
		\rho\left( \frac{\log x/p}{\log y}\right) =\rho(u)+O\left(  \frac{\log p}{\log y}\cdot\frac{(\log\log x)^{8}}{(\log x)^{2}}\right) .
	\end{equation} 
\end{lem}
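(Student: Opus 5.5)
The plan is to compare the two values of $\rho$ by the mean value theorem and then bound $\rho^{\prime}$ using the differential-delay equation and the decay estimates of Theorem \ref{thm5555SI.200T}. Put $v=\log(x/p)/\log y=u-\log p/\log y$. Since $\rho$ is continuous and differentiable for $u>1$, there is some $\xi$ between $v$ and $u$ with
\begin{equation*}
\rho(v)-\rho(u)=-\frac{\log p}{\log y}\,\rho^{\prime}(\xi).
\end{equation*}
This already gives the factor $\log p/\log y$ in the claimed error term. It remains to show $|\rho^{\prime}(\xi)|\ll(\log\log x)^{8}/(\log x)^{2}$ uniformly in the relevant primes $p$.

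Next I would use the delay equation from Definition \ref{dfn5555SI.200D}, namely $\xi\rho^{\prime}(\xi)=\rho(\xi-1)$ up to sign. Combined with Theorem \ref{thm5555SI.200T}(iii), this gives
\begin{equation*}
|\rho^{\prime}(\xi)|=\frac{\rho(\xi-1)}{\xi}\leq\frac{1}{\xi\,\Gamma(\xi)}=\frac{1}{\Gamma(\xi+1)}.
\end{equation*}
With $y=c_0(\log x)^{2/3}(\log\log x)^{4/3}$ as in the statement, we have $\log y=\tfrac23\log\log x+O(\log\log\log x)$. Hence $u\asymp\log x/\log\log x$, which is very large. Therefore, as soon as $\xi\geq u/2$ (or even just $\xi\geq 3$), Stirling's formula gives $1/\Gamma(\xi+1)\leq e^{-\xi\log\xi+O(\xi)}$. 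This is far smaller than $(\log\log x)^{8}/(\log x)^{2}$, and the case $\xi\ge 3$ alone would already suffice for $(\log x)^{2}$ after a crude check. The hypothesis $u>u_0$ is used only to guarantee that we are in the range $u>1$, where $\rho$ is differentiable and property (v) applies.

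The main obstacle is controlling the location of $\xi$, that is, keeping $v$ away from the region $[0,2]$ where $\rho^{\prime}$ is of size $1$. If $p$ were allowed to be as large as $x$, then $v$ could be close to $0$ and the bound would fail. So I will make explicit the range of $p$ that is implicit in the Tur\'an-type application: $p$ is a prime divisor of an integer $N\in[x,x+x^{3/5}]$ counted with $p\leq x^{1/2}$, or $p\leq B$. In that range $\log p/\log y\leq u/2$, so $\xi\geq v\geq u/2$. My first attempt is to impose this restriction and verify that it covers every sum in which the lemma is used. If larger primes occur, I would split off $p>x^{1/2}$ and treat those terms trivially using $0\leq\rho\leq1$, since each $N\leq 2x$ has at most one such prime factor. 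With $\xi\geq u/2$ secured, the superexponential decay finishes the estimate with room to spare.
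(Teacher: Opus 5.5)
Your argument is correct and is essentially the paper's own: the mean value theorem, the delay equation, and $\rho(t)\le 1/\Gamma(t+1)$. The paper keeps $\xi$ large by implicitly taking $p\le y$, since it bounds $|\rho'|$ only on $[u-1,u]$. It also reads $\log y=c_0(\log x)^{2/3}(\log\log x)^{4/3}$, so that its cruder bound $|\rho'(\xi)|\le u^{-6}$ reproduces the stated error term exactly. Your explicit restriction $p\le x^{1/2}$ is genuinely needed, because the hypothesis $u>u_0\ge 0$, with $u_0=\log(x/p)/\log y$, only says $1<p\le x$. That restriction, together with your superexponential bound at $\xi\ge u/2$, works under either reading of $y$.

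The only slip is the aside that $\xi\ge 3$ would already suffice: a fixed lower bound on $\xi$ gives only $|\rho'(\xi)|\ll 1$. Your main line uses $\xi\ge u/2$, so nothing depends on it.
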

\begin{proof}[\textbf{Proof}] Set $u=\log x/\log y$. Since $\rho(u)$ is a continuous function  of $u\geq0$, the mean value theorem (or a series expansion) generates the linear approximation
	\begin{eqnarray}\label{eq5757SI.835B06}
		\rho(u_0)&=&\rho\left( \frac{\log x/p}{\log y} \right)\nonumber\\[.2cm]
		&=&\rho\left( u-\frac{\log p}{\log y} \right)\nonumber\\[.2cm]
		&=& \rho(u)+O\left(  \frac{\log p}{\log y}\, \max_{v\in[u-1,u]} |\rho^{\prime}(v)|\right)  \nonumber\\[.2cm]&=&\rho(u)+O\left(  \frac{\log p}{\log y}\cdot\frac{1}{u^6}\right) .
	\end{eqnarray}
	The upper bound for $|\rho^{\prime}(u)|$ is derived from the relation $u\rho^{\prime}(u)-\rho(u-1)=0$. More precisely, it implies that
	\begin{align}\label{eq5757SI.835B08}
		|\rho^{\prime}(u)| &= \frac{\rho(u-1)}{u} \nonumber\\[.2cm]
		&\leq \frac{1}{u} \cdot \frac{1}{\Gamma(u)}\nonumber\\[.2cm]
		&\leq \frac{1}{u^6} 
	\end{align}
	for large $u>1$ since $\rho(u)\leq 1$ and $u^5\leq \Gamma(u)$ for large $u>1$, see \hyperlink{thm5555SI.200T}{Theorem} \ref{thm5555SI.200T} for more details. Replacing the value for $u$ yields
\begin{equation}\label{eq5757SI.835B10}
\frac{1}{u^6}=\left( \frac{\log y}{\log x}\right) ^6=
\left( \frac{c_0(\log x)^{2/3}(\log\log x)^{4/3}}{\log x}\right) ^6= \frac{c_1(\log\log x)^{8}}{(\log x)^{2}},
\end{equation}
where $c_1=c_0^6$ is a suitable constant.
\end{proof}
\begin{lem}\label{lem5757SI.875A}\hypertarget{lem5757SI.875A} Let $x$ be a large real number, $h=x^{\theta}>x^{7/12}$ and $y<h<x$. If $y=o(h)$, then
	\begin{equation}\label{eq5757SI.875A00}
		\sum_{\substack{x\leq n\leq x+h\\P(n)\leq y}}\omega(n)=h\rho(u) (\log\log y)\left( 1+O_{\theta}\left(\frac{\log(u+1) }{\log y} \right) \right)+O(h\rho(u)),
	\end{equation}
	where $u=\log x/\log y$.
\end{lem}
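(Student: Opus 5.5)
The natural route is the Tur\'an-style interchange of summation. Write $\omega(n)=\sum_{p\mid n}1$ and swap the order:
\begin{equation*}
\sum_{\substack{x\leq n\leq x+h\\P(n)\leq y}}\omega(n)=\sum_{p\leq y}\;\#\Bigl\{m\in\Bigl[\tfrac{x}{p},\tfrac{x+h}{p}\Bigr]: P(m)\leq y\Bigr\}
=\sum_{p\leq y}\Bigl(\Psi\Bigl(\tfrac{x+h}{p},y\Bigr)-\Psi\Bigl(\tfrac{x}{p},y\Bigr)\Bigr).
\end{equation*}
This holds because $n=pm$ is $y$-smooth exactly when $m$ is, given $p\leq y$. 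Each inner term counts smooth numbers in a short interval $[x/p,x/p+h/p]$ of length $h/p$ around $x/p$.

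The plan is to evaluate each inner count with Theorem \ref{thm5757SI.450}, applied at the point $x/p$ with length $h/p$. This gives
\begin{equation*}
\frac{h}{p}\,\rho\Bigl(\frac{\log(x/p)}{\log y}\Bigr)\Bigl(1+O_\theta\Bigl(\frac{\log(u+1)}{\log y}\Bigr)\Bigr).
\end{equation*}
Lemma \ref{lem5757SI.835B} then replaces $\rho(\log(x/p)/\log y)$ by $\rho(u)$, with an error of size $\frac{\log p}{\log y}(\log\log x)^8/(\log x)^2$. Summed against $h/p$, that error is $O(h(\log\log x)^8/(\log x)^2)$, which should be absorbed into $O(h\rho(u))$ in the stated range of $y$. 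The main term becomes $h\rho(u)\sum_{p\leq y}1/p$. Mertens' theorem gives $\sum_{p\le y}1/p=\log\log y+O(1)$, which produces $h\rho(u)\log\log y\,(1+O(\cdot))+O(h\rho(u))$.

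The main obstacle is uniformity. Theorem \ref{thm5757SI.450} needs $h/p\geq (x/p)^{\theta'}$ for some fixed $\theta'>17/30$, and needs $y$ to lie in the admissible range relative to $x/p$. For $p\leq y$ the first condition reduces to $p^{1-\theta'}\leq x^{\theta-\theta'}$. Since $\theta>7/12>17/30$, we can pick $\theta'$ strictly between $17/30$ and $\theta$; the condition then holds whenever $y\leq x^{(\theta-\theta')/(1-\theta')}$, which is automatic for $y$ of subexponential size such as $B$.

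I would first try to restrict to that regime. If $y$ is genuinely allowed up to $h$, I would handle large primes $p>x^{\delta}$ separately, with a trivial bound. A $y$-smooth $n$ has at most $1/\delta$ prime factors exceeding $x^\delta$, so these contribute at most $(1/\delta)\bigl(\Psi(x+h,y)-\Psi(x,y)\bigr)\ll h\rho(u)$ by Theorem \ref{thm5757SI.450}. Dropping them from Mertens' sum costs only $O(1)$, and all of this lands in the $O(h\rho(u))$ term. The remaining step is checking that the Lemma \ref{lem5757SI.835B} error is $o(\rho(u))$. If it is not, I would instead bound $\rho(u-\log p/\log y)/\rho(u)\ll e^{O(\log p\,\log u/\log y)}$ directly, which is $1+O(\log p\log u/\log y)$ for small $p$. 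Summing $\log p/p$ then gives an error $O(\log u)$, to be compared with $\log\log y$.
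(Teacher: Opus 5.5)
You follow the paper's own route: interchange, Theorem \ref{thm5757SI.450} at $x/p$ with length $h/p$, Lemma \ref{lem5757SI.835B}, then Mertens. It breaks at the step you flag, and the break cannot be repaired.

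Lemma \ref{lem5757SI.835B} gives an \emph{absolute} error. Summed against $h/p$, it yields only the bound $O\bigl(h(\log\log x)^8/(\log x)^2\bigr)$, and this is not $O(h\rho(u))$ once $u\to\infty$. For $y=B$ one has $\rho(u)=e^{-(\log x)^{1/3-o(1)}}$, which is smaller than any power of $1/\log x$. The paper's proof has the same gap: it gets this error as $O(h)$ and then silently writes $O(h\rho(u))$.

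Your fallback uses the right kind of relative bound, but $e^{O(t\log u)}=1+O(t\log u)$ holds only when $t=\log p/\log y\ll 1/\log u$. For larger $p$ the ratio $\rho(u-t)/\rho(u)$ grows exponentially in $t\log u$. At $t=1$ it reaches $\rho(u-1)/\rho(u)\ge u$, since $u\rho(u)=\int_{u-1}^u\rho\le\rho(u-1)$ by Theorem \ref{thm5555SI.200T}(ii). That range $y^{1/\log u}<p\le y$ is where the mass sits. Even your claimed error $O(\log u)$ would not fit into $O(h\rho(u))$, and for $y=B$ it is of the same order as $\log\log y$.

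In fact the statement is false whenever $u$ is unbounded. Use partial summation against the prime number theorem, together with $\rho(u-t)\ge\rho(u)$, $1/t\ge 1$ on $(0,1]$, and $\int_0^1\rho(u-t)\,dt=u\rho(u)$. This gives
\[
\sum_{p\le y}\frac1p\,\rho\Bigl(u-\frac{\log p}{\log y}\Bigr)\ \ge\ \int_{\log 2/\log y}^{1}\rho(u-t)\,\frac{dt}{t}-O(\rho(u))\ \ge\ \rho(u)\bigl(\log\log y+u-O(1)\bigr).
\]
For $y=B$, where evaluating each inner sum by Theorem \ref{thm5757SI.450} is legitimate, the sum in the lemma is therefore at least $h\rho(u)\bigl(\log\log y+u-O(1)\bigr)(1+o(1))$. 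The lemma instead claims $h\rho(u)\log\log y+O(h\rho(u))$.

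The extra $u$ is forced: every $y$-smooth $n\ge x$ has $\Omega(n)\ge\log x/\log y=u$. For $y=B$ one has $u\asymp(\log x)^{1/3}(\log\log x)^{-4/3}$, far larger than $\log\log y\asymp\log\log x$. So the lemma holds only while $u$ stays bounded (e.g.\ $y\ge x^{c}$), where your argument with the trivial bound $|\rho'|\le 1$ does go through. In the regime $y=B$, where the paper applies it, the lemma is false.
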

\begin{proof}[\textbf{Proof}]By definition 
\begin{equation}\label{eq5757SI.875A02}
\sum_{\substack{x\leq n\leq x+h\\P(n)\leq y}}\omega(n)=\sum_{\substack{x\leq n\leq x+h\\P(n)\leq y}}\sum_{p\mid n}1=\sum_{p\leq y} \sum_{\substack{x\leq n\leq x+h\\P(n)\leq y\\p\mid n}}1.
\end{equation} 
Utilizing \hyperlink{thm5757SI.450}{Theorem} \ref{thm5757SI.450} to evaluate the inner sum yields
\begin{eqnarray}\label{eq5757SI.875A04}
\sum_{p\leq y} \sum_{\substack{x\leq n\leq x+h\\P(n)\leq y\\p\mid n}}1
&=&\sum_{p\leq y}\left( \Psi\left( \frac{x+h}{p},y\right) -\Psi\left( \frac{x}{p},y\right)\right) \nonumber\\[.2cm]
&=&\sum_{p\leq y} \frac{h}{p}\,	\rho\left(u_0\right) \left( 1+O_{\theta}\left(\frac{\log(u_0+1) }{\log y} \right)\right),
\end{eqnarray}
where $u_0=\log(x/p)/\log y$. Substituting the linear approximation given in \hyperlink{lem5757SI.835B}{Lemma} \ref{lem5757SI.835B} and evaluating the finite sum yield

\begin{eqnarray}\label{eq5757SI.875A08}
	\sum_{\substack{x\leq n\leq x+h\\P(n)\leq y}}\omega(n)
	&=&\sum_{p\leq y}\left( \frac{h}{p}\,\left( \rho(u)+O\left(  \frac{\log p}{\log y}\cdot\frac{1}{u^6}\right)\right) \left( 1+O_{\theta}\left(\frac{\log(u+1) }{\log y} \right)\right) \right) \nonumber\\[.2cm]
	&= &h\rho(u) (\log\log y)\left( 1+O_{\theta}\left(\frac{\log(u+1) }{\log y} \right) \right)+O(h\rho(u)),
\end{eqnarray}

where 
\begin{equation}\label{eq5757SI.875A10}
	O\left( \frac{h}{\log y}\cdot\frac{1}{u^6}	\sum_{p\leq y}\frac{\log p}{p}\right) =O(h).
\end{equation}
since $u^6>(\log y)^2$ is large, see .  
\end{proof}

\begin{lem}\label{lem5757SI.875B}\hypertarget{lem5757SI.875B} Let $x$ be a large real number, then
	\begin{equation}\label{eq5757SI.875B00}
		A(x)	=	\sum_{\substack{p,q\leq x\\p\ne q}}\frac{1}{p}\cdot \frac{1}{q}=(\log\log x)^2+O\left(\log\log x\right).
	\end{equation} 
\end{lem}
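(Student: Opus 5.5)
The approach is to write the constrained sum as the square of an unconstrained one minus its diagonal, and then apply Mertens' second theorem. Writing
\begin{equation*}
S(x)=\sum_{p\leq x}\frac{1}{p},
\end{equation*}
the sum over ordered pairs with $p\ne q$ equals
\begin{equation*}
A(x)=\Bigl(\sum_{p\leq x}\frac{1}{p}\Bigr)^{2}-\sum_{p\leq x}\frac{1}{p^{2}}.
\end{equation*}
Equivalently, expand $S(x)^2$ and separate the terms with $p=q$.

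The first step is to apply Mertens' second theorem,
\begin{equation*}
S(x)=\log\log x+M+O\!\left(\frac{1}{\log x}\right),
\end{equation*}
where $M$ is the Meissel--Mertens constant. For the purposes of this lemma the cruder form $S(x)=\log\log x+O(1)$ is enough. Squaring then gives
\begin{equation*}
S(x)^2=(\log\log x)^2+O(\log\log x).
\end{equation*}
Here the error term comes from the cross term $2M\log\log x$ together with the $O(1)$ contributions.

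The second step is to bound the diagonal. We have
\begin{equation*}
\sum_{p\leq x}\frac{1}{p^{2}}\leq\sum_{n\geq 2}\frac{1}{n^{2}}<1,
\end{equation*}
so the diagonal is $O(1)$ and is absorbed into the error $O(\log\log x)$. Combining the two steps yields the claimed formula for $A(x)$.

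None of these steps is a real obstacle. The only input that needs citing is Mertens' theorem, which can be taken from \cite[Chapter 9]{DL2012} or any standard text; the rest is elementary bookkeeping. If the sum is meant over unordered pairs, the result changes only by a factor $1/2$, but as written it is over ordered pairs, and the stated main term $(\log\log x)^2$ matches that reading.
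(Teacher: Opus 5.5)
Your proof is correct and is essentially the paper's argument: the identity $A(x)=\bigl(\sum_{p\le x}p^{-1}\bigr)^2-\sum_{p\le x}p^{-2}$, then Mertens' theorem $\sum_{p\le x}p^{-1}=\log\log x+O(1)$, with the diagonal sum bounded by $O(1)$. Your explicit bound $\sum_{n\ge2}n^{-2}=\pi^2/6-1<1$ for the diagonal is slightly more detailed than the paper's.
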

\begin{proof}[\textbf{Proof}]First, consider the identity  
	\begin{equation}\label{eq5757SI.875B02}
		\sum_{\substack{p,q\leq x\\p\ne q}}\frac{1}{p}\cdot \frac{1}{q}=\left( 		\sum_{p\leq x}\frac{1}{p}\right) ^2-		\sum_{p\leq x}\frac{1}{p^2}.
	\end{equation}
Now, the claim follows from Mertens theorem
	\begin{equation}\label{eq5757SI.875B04}
		\sum_{p\leq x}\frac{1}{p}=\log\log x+O(1),
	\end{equation}
and the diagonal contribution $\sum_{p\leq x}p^{-2}=O(1)$. 
\end{proof} 
\begin{lem}\label{lem5757SI.875C}\hypertarget{lem5757SI.875C} Let $x$ be a large real number, $h=x^{\theta}>x^{7/12}$ and $y<h<x$. If $y=o(h)$, then
	\begin{equation}\label{eq5757SI.875C00}
		\sum_{\substack{x\leq n\leq x+h\\P(n)\leq y}}\omega^2(n)=h\rho(u)(\log\log y)^2+O(h\rho(u)\log\log y),
	\end{equation}
	where $u=\log x/\log y$.
\end{lem}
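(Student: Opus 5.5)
Expand $\omega^2(n)=\sum_{p\mid n}\sum_{q\mid n}1$ and split it into the diagonal part $p=q$, which is just $\omega(n)$, and the off-diagonal part $p\neq q$. This gives
\begin{equation*}
\sum_{\substack{x\leq n\leq x+h\\P(n)\leq y}}\omega^2(n)=\sum_{\substack{x\leq n\leq x+h\\P(n)\leq y}}\omega(n)+\sum_{\substack{p,q\leq y\\p\neq q}}\;\sum_{\substack{x\leq n\leq x+h\\P(n)\leq y\\pq\mid n}}1 .
\end{equation*}
By Lemma \ref{lem5757SI.875A}, the diagonal sum is $h\rho(u)\log\log y\,(1+o(1))+O(h\rho(u))$. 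This already fits inside the error term $O(h\rho(u)\log\log y)$. So all the work is in the off-diagonal double sum.

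For distinct primes $p,q\leq y$, the inner count equals $\Psi((x+h)/pq,y)-\Psi(x/pq,y)$. This is the count of $y$-smooth integers in an interval of length $h/pq$ starting at $x/pq$.

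\emph{Case $pq$ small.} Suppose $pq$ is small enough that $h/pq\geq (x/pq)^{\theta'}$ for some admissible $\theta'>17/30$. Then Theorem \ref{thm5757SI.450} applies to the shifted interval, exactly as in the proof of Lemma \ref{lem5757SI.875A}, and gives
\begin{equation*}
\frac{h}{pq}\,\rho(u_{pq})\left(1+O\!\left(\frac{\log(u+1)}{\log y}\right)\right),\qquad u_{pq}=u-\frac{\log pq}{\log y}.
\end{equation*}
Next apply Lemma \ref{lem5757SI.835B} with $pq$ in place of $p$. The same linear approximation gives $\rho(u_{pq})=\rho(u)+O\big(\tfrac{\log pq}{\log y}u^{-6}\big)$. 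Summing the main term over $p\neq q$ and using Lemma \ref{lem5757SI.875B} produces
\begin{equation*}
h\rho(u)\big((\log\log y)^2+O(\log\log y)\big).
\end{equation*}
The correction term is bounded by
\begin{equation*}
\frac{h}{u^6\log y}\sum_{p,q\leq y}\frac{\log p+\log q}{pq}\ll \frac{h\log\log y}{u^6},
\end{equation*}
using Mertens' estimates $\sum_{p\le y}\log p/p\ll\log y$ and $\sum_{p\le y}1/p\ll\log\log y$. This must be compared against $h\rho(u)\log\log y$. As in Lemma \ref{lem5757SI.875A}, I intend to absorb it using the relation between $u^{-6}$ and $\rho(u)$ that the paper uses there. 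Honestly, this comparison is delicate, because $\rho(u)$ decays much faster than $u^{-6}$. If it fails, I would instead bound $\rho(u_{pq})\leq\rho(u)\,e^{O(\log u\cdot\log pq/\log y)}$ using $\rho(u-t)/\rho(u)\approx (u\log u)^{t}$. That keeps the error proportional to $\rho(u)$, at the cost of restricting $pq\leq y^{\delta}$ with $\delta$ small.

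\emph{Case $pq$ large.} Here the interval of length $h/pq$ is too short for Theorem \ref{thm5757SI.450}, and this is the main obstacle. The plan is to handle this range trivially: bound the count by $h/pq+1$, or better by the smooth count in an interval of length $h/pq$ up to a bounded factor. Then show that $\sum_{y^{\delta}<pq\leq y^{2}}1/pq\ll 1$, using Mertens in the form $\sum_{z<p\leq z^{c}}1/p=O(1)$. Because $y=o(h)$, the "$+1$" terms contribute at most $\pi(y)^2$. That is negligible only if $\pi(y)^2=o(h\rho(u))$, and the hypotheses on $y$ and $h$ (with $y$ subexponential and $h=x^\theta$) should give this.

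Adding the diagonal and off-diagonal pieces yields $h\rho(u)(\log\log y)^2+O(h\rho(u)\log\log y)$, as claimed.
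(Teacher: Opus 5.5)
There is a genuine gap, and it lies in the off-diagonal range you planned to dismiss. The density of $y$-smooth integers near $x/pq$ is $\rho(u-t)$ with $t=\log pq/\log y\in(0,2]$. By the standard estimate (Hildebrand--Tenenbaum) $\rho(u-t)/\rho(u)\asymp e^{t\xi(u)}\asymp (u\log u)^{t}$, where $\xi(u)\sim\log u$ solves $e^{\xi}=1+u\xi$. So this density exceeds $\rho(u)$ by an unbounded factor once $t\gg 1/\log u$.

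This breaks several of your steps:
\begin{itemize}
\item Bounding the count by ``the smooth count in an interval of length $h/pq$ up to a bounded factor'' (i.e.\ by $O(h\rho(u)/pq)$) is false.
\item The trivial bound $h/pq+1$ drops $\rho(u)$ entirely. It gives a contribution of size $h\log\log y$, which is not $O(h\rho(u)\log\log y)$ because $\rho(u)\to 0$.
\item Your small-$pq$ fallback has the same flaw: $e^{O(\log u\cdot \log pq/\log y)}$ with $pq\le y^{\delta}$ is $u^{O(\delta)}$. This is bounded only when $\log pq\ll \log y/\log u$.
\item $\sum_{y^\delta<pq\le y^2}1/pq$ is $\asymp\log(1/\delta)\log\log y$, not $O(1)$: the pairs with $p>y^\delta$ and arbitrary $q\le y$ already give this.
\end{itemize}
Conversely, the short-interval condition is not the real obstacle in the paper's regime. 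There $y^2=x^{o(1)}$, so $h/pq\ge (x/pq)^{\theta'}$ holds for every pair once $17/30<\theta'<\theta$.

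No argument can close this gap, because the statement is false where the paper uses it. Take $\log y=c_0(\log x)^{2/3}(\log\log x)^{4/3}$; then $u\asymp(\log x)^{1/3}(\log\log x)^{-4/3}$ while $\log\log y\asymp\log\log x$. Keep only the primes $\sqrt{y}<p\le y$, and use the same short-interval input together with the monotonicity of $\rho$:
\[
\sum_{\substack{x\le n\le x+h\\ P(n)\le y}}\omega(n)\;\ge\;(1+o(1))\sum_{\sqrt{y}<p\le y}\frac{h}{p}\,\rho\bigl(u-\tfrac12\bigr)\;\gg\; h\rho(u)\,(u\log u)^{1/2}.
\]
Apply Cauchy--Schwarz against the count $\sim h\rho(u)$ of $y$-smooth integers in $[x,x+h]$. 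This gives $\sum\omega^2(n)\gg h\rho(u)\,u\log u$, which is far larger than $h\rho(u)(\log\log y)^2$. Running the same computation with $p$ near $y$ shows that the mean of $\omega$ over these smooth integers is $\log\log y$ plus a term of order $u$. Hence Lemma \ref{lem5757SI.875A}, which you use for the diagonal, also fails in this regime.

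The paper's own proof has exactly the defect you flagged. It bounds the linear-approximation error by $O(h)$, which does not fit inside $O(h\rho(u)\log\log y)$. The Lipschitz bound $u^{-6}$ is useless at the scale $\rho(u)\approx e^{-u\log u}$. The claimed formula can only be expected when $u$ stays bounded, where $\rho(u-t)/\rho(u)=O(1)$ for $t\le 2$.
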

\begin{proof}[\textbf{Proof}]Expanding the square 
	\begin{equation}\label{eq5757SI.875C02}
		\omega^2(n)=\Bigg( \sum_{p\mid n}1\Bigg)^2=\sum_{p\mid n}1+\sum_{\substack{p,q\mid n\\p\ne q\\p,q\text{ primes}}}1, 
	\end{equation}
	and switching the order of summations lead to 	 
	\begin{eqnarray}\label{eq5757SI.875C04}
		\sum_{\substack{x\leq n\leq x+h\\P(n)\leq y}}\omega^2(n)&=&	\sum_{\substack{x\leq n\leq x+h\\P(n)\leq y}}\Bigg( \sum_{p\mid n}1+\sum_{\substack{p,q\mid n\\p\ne q\\p,q\text{ primes}}}1\Bigg) \nonumber\\[.2cm]
		&= &\sum_{p\leq y}  \sum_{\substack{x\leq n\leq x+h\\P(n)\leq y\\p\mid n}}1+\sum_{p,q\leq y}  \sum_{\substack{x\leq n\leq x+h\\P(n)\leq y\\p,q\mid n\\p\ne q}}1.
	\end{eqnarray} 
	The first double finite sum
	\begin{equation}\label{eq5757SI.875C06}
\sum_{p\leq y}  \sum_{\substack{x\leq n\leq x+h\\P(n)\leq y\\p\mid n}}1=h\rho(u) (\log\log y)\left( 1+o(1) \right)+O(h\rho(u)),
	\end{equation}
see \hyperlink{lem5757SI.875A}{Lemma} \ref{lem5757SI.875A}.  Utilize \hyperlink{lem5757SI.875B}{Lemma} \ref{lem5757SI.875B} to evaluate the inner sum in the second double finite sum
	\begin{eqnarray}\label{eq5757SI.875C08}
		\sum_{p,q\leq y}  \sum_{\substack{x\leq n\leq x+h\\p,q\mid n\\p\ne q}}1&=&\sum_{p,q\leq y}\left( \Psi\left( \frac{x+h}{pq},y\right) -\Psi\left( \frac{x}{pq},y\right)\right) \\[.2cm]
		&=&\sum_{p,q\leq y}\frac{h}{pq}\,\rho\left( u_1 \right) \left( 1+O_{\theta}\left(\frac{\log(u_1+1) }{\log y} \right)\right)\nonumber,
	\end{eqnarray}
where $u_1=\log(x/pq)/\log y$. Substituting the linear approximation given in \hyperlink{lem5757SI.835B}{Lemma} \ref{lem5757SI.835B} and evaluating the finite sum yield
	\begin{eqnarray}\label{eq5757SI.875C12}
		\sum_{p,q\leq y}  \sum_{\substack{x\leq n\leq x+h\\p,q\mid n\\p\ne q}}1
		&=&h\left( \rho(u)	+O\left( \frac{\log pq}{\log y}\cdot\frac{1}{u^6}\right)\right) \left( 1+O_{\theta}\left(\frac{\log(u_1+1) }{\log y} \right) \right) \sum_{p,q\leq y}\frac{1}{pq}\nonumber\\[.2cm]
&=&	h\rho(u)\left( 1+O_{\theta}\left(\frac{\log(u_1+1) }{\log y} \right)  \right)\nonumber\\[.3cm]
&&\hskip 1.75in  \times \left((\log\log y)^2+O\left(\log\log y\right) \right)  \nonumber\\[.3cm]
		&= &h\rho(u) (\log\log y)^2 +O\left(h\rho(u) \log\log y\right) ,
	\end{eqnarray}
where 
\begin{equation}\label{eq5757SI.875A12}
	O\left( \frac{h}{\log y}\cdot\frac{1}{u^6}	\sum_{p,q\leq y}\frac{\log pq}{pq}\right) =O(h).
\end{equation}
since $1/u^6>1/(\log y)^2$, see \eqref{eq5757SI.835B10}.  
Summing \eqref{eq5757SI.875C06} and \eqref{eq5757SI.875C12} completes the verification. 
\end{proof}

\begin{thm}\label{thm5757SI.875}\hypertarget{thm5757SI.875} Let $x$ be a large real number, $h=x^{\theta}>x^{7/12}$ and $y<h<x$ with $y=o(h)$. If $\delta > 0$, $A> 0$, then the subset of $y$-smooth integers $n\in [x,x+h] $, for which the inequality 
	\begin{equation}\label{eq5757SI.875.00}
		\omega(n)>\log\log y+A(\log\log y)^{1/2+\delta}
	\end{equation} 
holds has zero density in $[x,x+h]$. In particular, its cardinality is $o(h)$. 
\end{thm}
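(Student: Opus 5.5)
The natural route is a Tur\'an-type second moment (Chebyshev) argument. It uses the first and second moments of $\omega(n)$ over the $y$-smooth integers in $[x,x+h]$, which have just been computed in Lemma \ref{lem5757SI.875A} and Lemma \ref{lem5757SI.875C}. Write
\begin{equation*}
\mathcal{S}=\{\,n\in[x,x+h]: P(n)\le y\,\}, \qquad M=\#\mathcal{S}, \qquad L=\log\log y .
\end{equation*}
By Theorem \ref{thm5757SI.450} we have $M=h\rho(u)(1+o(1))$.

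The first step is to bound the centered second moment
\begin{equation*}
V=\sum_{n\in\mathcal{S}}\bigl(\omega(n)-L\bigr)^2=\sum_{n\in\mathcal{S}}\omega^2(n)-2L\sum_{n\in\mathcal{S}}\omega(n)+L^2 M .
\end{equation*}
Insert Lemma \ref{lem5757SI.875C} for the first sum and Lemma \ref{lem5757SI.875A} for the second. The three main terms are $h\rho(u)L^2$, $-2h\rho(u)L^2$ and $h\rho(u)L^2$, and they cancel. What remains should be
\begin{equation*}
V=O\!\left(h\rho(u)\,L\right)+O\!\left(h\rho(u)L^2\,\frac{\log(u+1)}{\log y}\right)+O\!\left(L^2\,|M-h\rho(u)|\right).
\end{equation*}
The relative error $\log(u+1)/\log y$ from Theorem \ref{thm5757SI.450} is tiny in the relevant range, so I expect $V\ll h\rho(u)L$, up to the bookkeeping below.

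The second step is Chebyshev's inequality. Each $n\in\mathcal{S}$ satisfying \eqref{eq5757SI.875.00} contributes at least $A^2L^{1+2\delta}$ to $V$. Hence the number of such $n$ is at most
\begin{equation*}
\frac{V}{A^2L^{1+2\delta}}\ll \frac{h\rho(u)}{A^2 L^{2\delta}} = o\bigl(h\rho(u)\bigr)=o(M).
\end{equation*}
This says the exceptional set has zero density inside the smooth integers of $[x,x+h]$. Since $\rho(u)\le 1$, it is a fortiori $o(h)$ and has zero density in $[x,x+h]$, which is what the theorem claims. The extra factor $L^{2\delta}$ in the threshold is exactly what produces the saving. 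It also makes the argument robust to losing a constant or a $\log\log$-power of lower order in $V$.

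The main obstacle is the cancellation step. The three main terms cancel only to leading order, so the error terms must be tracked with the correct weights. In particular:

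\begin{enumerate}[label=(\alph*)]
\item The error in Lemma \ref{lem5757SI.875A} is multiplied by $2L$. It must therefore be $O(h\rho(u))$ and not merely $O(h)$.
\item The $O(h)$ remainders coming from the linear approximation in Lemma \ref{lem5757SI.835B} must really be $O(h\rho(u))$ relative to the main term. Otherwise they would swamp $h\rho(u)L$, since $\rho(u)$ is small.
\end{enumerate}

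I would first recheck those remainders, using $|\rho'(v)|=\rho(v-1)/v$ together with $\rho(v-1)\ll \rho(u)\,u\log u$ for $v\in[u-1,u]$. This should give a relative error of size $\frac{\log p}{\log y}\cdot u\log u\cdot\rho(u)$. Summed against $1/p$, and against $1/(pq)$ in the second moment, this yields $O(h\rho(u)\,u\log u/\log y)=o(h\rho(u))$ in the stated range.

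If this bookkeeping fails, the fallback is to use only $V\ll h\rho(u)L + (\text{error})$ and pick $\delta$ large enough that the error divided by $L^{1+2\delta}$ is still $o(h)$.
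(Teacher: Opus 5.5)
\textbf{There is a genuine gap.} Your route is the paper's own: center at $L=\log\log y$, insert Lemmas \ref{lem5757SI.875A} and \ref{lem5757SI.875C}, apply Chebyshev. The paper takes those lemmas at face value. Your worry (b) is well founded, since the proof of Lemma \ref{lem5757SI.875A} only controls the remainder to $O(hu^{-6})$, which dwarfs $h\rho(u)$. But your repair fails, and the gap cannot be closed.

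First, the bookkeeping. By Mertens, $\frac{1}{\log y}\sum_{p\le y}\frac{\log p}{p}=1+o(1)$. So summing an error $\frac{\log p}{\log y}\,u\log u\,\rho(u)$ against $h/p$ gives $\asymp h\rho(u)\,u\log u$, not $h\rho(u)\,u\log u/\log y$. Also, for $v\in[u-1,u]$, $\rho(v-1)$ can be as large as $\rho(u-2)\asymp\rho(u)(u\log u)^2$.

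More fundamentally, a shift by $t=\log p/\log y\in[0,1]$ is not a small perturbation of $\rho$. One has $\rho(u-t)=\rho(u)e^{t\xi(u)}(1+o(1))$ with $e^{\xi}=1+u\xi$, so
\begin{equation*}
\sum_{p\le y}\frac{h}{p}\,\rho\Bigl(u-\frac{\log p}{\log y}\Bigr)\approx h\rho(u)\Bigl(L+\int_0^{\xi(u)}\frac{e^s-1}{s}\,ds\Bigr)\approx h\rho(u)\,(L+u).
\end{equation*}
The mean of $\omega$ on $\mathcal S$ is therefore about $L+u$, not $L$. When $u\to\infty$, which is the regime of the paper's parameters, Lemmas \ref{lem5757SI.875A} and \ref{lem5757SI.875C} are false as stated, and the three main terms in $V$ do not cancel.

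In that regime your intermediate conclusion is false, not merely unproved. Every $n\in\mathcal S$ has $\Omega(n)\ge u$. The number of $y$-smooth $n\le 2x$ with $\omega(n)\le K$ is at most $(K+1)\bigl(\pi(y)\log_2(2x)\bigr)^K=\exp\bigl(K\log y\,(1+o(1))\bigr)$. Take $\log y=c_0(\log x)^{2/3}(\log\log x)^{4/3}$ and $K=L+AL^{1/2+\delta}$. Then this count is $x^{o(1)}$, while $M=h\rho(u)(1+o(1))=x^{3/5-o(1)}$. So for every fixed $A,\delta>0$, all but $x^{o(1)}$ elements of $\mathcal S$ satisfy the inequality. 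Taking $K=u/2$ in the same count shows $V\gg Mu^2$.

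Your claim that the exceptional set is $o(M)$ is therefore false. That is exactly the version the paper uses later, to find a smooth $c\in[x,x+h]$ with $\omega(c)\le 2\log\log x$. Enlarging $\delta$ cannot rescue it: $\delta$ is fixed in the statement, and you would need $L^{1+2\delta}\gg u^2$.

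All that survives is the literal ``$o(h)$''. In this range that follows at once from $M\sim h\rho(u)$ and $\rho(u)\to 0$, with no moments at all. For bounded $u$ the centering at $L$ is essentially right. But then the moment lemmas need Theorem \ref{thm5757SI.450} on intervals $[x/d,(x+h)/d]$ with $d$ up to a power of $x$, and there the condition $h/d\ge (x/d)^{\theta'}$, $\theta'>17/30$, can fail.
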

\begin{proof}[\textbf{Proof}] The absolute squared error is 
\begin{eqnarray}\label{eq5757SI.875.02}
R(x,h)&=&\sum_{\substack{x\leq n\leq x+h\\P(n)\leq y}}\left( \omega(n)-\log\log y\right)^2 \\[.3cm]
&= &\sum_{\substack{x\leq n\leq x+h\\P(n)\leq y}} \omega^2(n)-2(\log\log y)	\sum_{\substack{x\leq n\leq x+h\\P(n)\leq y}} \omega(n)+(\log\log y)^2	\sum_{\substack{x\leq n\leq x+h\\P(n)\leq y}}1\nonumber\\[.3cm]
&=&h\rho(u)(\log\log y)^2+O(h\rho(u)\log\log y)\nonumber\\[.3cm]
&&\hskip .25 in  -2(\log\log y)[ h\rho(u) (\log\log y)\left( 1+O_{\theta}\left(\frac{\log(u+1) }{\log y} \right) \right)+O(h\rho(u))]\nonumber\\[.3cm]
&&\hskip 1.2 in +(\log\log y)^2\left( h\rho(u)\left( 1+O_{\theta}\left(\frac{\log(u+1) }{\log y} \right) \right) \right)  \nonumber\\[.3cm]
&=&O(h \rho(u)\log\log y)\nonumber,
\end{eqnarray}
since
\begin{equation}\label{eq5757SI.875.04}
	\frac{\log(u+1) }{\log y}=\frac{\log\log x}{(\log x)^{2/3}}.
\end{equation}
Let $ \mathcal{K}=\{n: \omega(n)\geq \log\log y+A(\log\log y)^{1/2+\delta}\}$, let $ \#\mathcal{K}$ be its cardinality and set $k\geq A(\log\log y)^{1/2+\delta}$. Then
\begin{eqnarray}\label{eq5757SI.875.06}
	O(h \rho(u)\log\log y))&=&		\sum_{\substack{x\leq n\leq x+h\\P(n)\leq y\\\omega(n)\geq k}}\left( \omega(n)-\log\log y\right)^2\nonumber\\[.3cm]	
		&= &	\sum_{n\in \mathcal{K}} k^2\nonumber\\[.3cm]
		&\geq&k^2 \# \mathcal{K}.
	\end{eqnarray}
	This implies that 	
	\begin{equation}\label{eq5757SI.875.08}
		\# \mathcal{K}=O\left( \frac{h \rho(u)\log\log y}{k^2}\right) =O\left( \frac{h\rho(u) }{(\log\log y)^{2\delta}}\right).
	\end{equation}

For the parameters 
\begin{align}\label{eq5757SI.875.10}
y&=	\exp\left( c_0 (\log x)^{2/3} (\log \log x)^{4/3} \right),\\[.3cm]	
h&=x^{3/5}> x^{17/30},\\[.3cm]	\label{eq5757SI.875.12}
u&=\frac{\log x}{\log y}=\frac{\log x}{c_0 (\log x)^{2/3} (\log \log x)^{4/3}}=	\frac{(\log x)^{1/3}}{c_0(\log \log x)^{4/3}},\\[.3cm]	\label{eq5757SI.875.14}
\rho(u)&=e^{-u\left(\log u +\log\log u -1 +o(1)\right) }=e^{-(\log x)^{1/3+\beta}},\\[.3cm]	\label{eq5757SI.875.16}
\log \log y&=\log c_0 (\log x)^{2/3} (\log \log x)^{4/3}\asymp \log\log x ,
\end{align}
the number $\# \mathcal{K}$-smooth integers with $\omega(n)\geq \log\log x+A(\log\log x)^{1/2+\delta}$ prime factor has the upper bound 
\begin{equation}\label{eq5757SI.875.20}
\# \mathcal{K}=O\left( \frac{h\rho(u) }{(\log\log y)^{2\delta}}\right)=O\left( \frac{x^{3/5}e^{-(\log x)^{1/3+\beta}} }{(\log\log x)^{2\delta}}\right),
\end{equation} 
for some suitable constants $c_0=c_0(\theta)>0$ and for some small real numbers $\beta>0$, $\delta>0$.
\end{proof} 
\section{Exceptional Triples in Short Intervals}\label{S5757SI-K}\hypertarget{S5757SI-K}
This counterexample is based on the established theory of smooth integers in short intervals. The notations $\rad(abc)<c^{1-\varepsilon_0}$ and $c>\rad(abc)^{1+\varepsilon}$ are equivalent, but $\varepsilon_0=\varepsilon/(1+\varepsilon)$. So the proof can be written in either form. 
\begin{thm}\label{thm5757SI.425A}\hypertarget{thm5757SI.425A} Let $\theta=3/5$ and let $x>1$ be a sufficiently large real number. Suppose that 
\begin{enumerate}[font=\normalfont, label=(\roman*)]
\item$\displaystyle B=e^{	 c_0 (\log x)^{2/3} (\log \log x)^{4/3}}$, \tabto{8cm}Smooth parameter.
\item $\displaystyle h=x^{3/5}$, \tabto{8cm}Short interval size.
\item $\displaystyle \#\{n\leq x:\omega(n)>w\}=o(x^{3/5})$, \tabto{8cm}Most smooth integers in $[x,x+h]$\item[] \tabto{8cm}have $\omega(n)\leq w$ prime factors.
\end{enumerate}
where $c_0=c_0(\theta)>0$ is a suitable constant and $w=A(\log\log y)^{1/2+\delta}$, $A>0$ constant. Then, as $x\to\infty$, there are infinitely many $B$-smooth integers 
\begin{equation}\label{eq5757SI.425A02}
c\in [x,x+x^{3/5}],
\end{equation} 
such that $a+b=c$ and the abc inequality fails: 
\begin{equation}\label{eq5757SI.425A04}
	c\leq c_{\varepsilon}\left( \rad\left(abc \right)  \right)^{1+\varepsilon},
\end{equation}
where $\gcd(a,b,c)=1$.
\end{thm}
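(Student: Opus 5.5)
The plan is to build the triple from \emph{two} $B$-smooth integers in the same short interval, with their difference playing the role of the third term. The intended conclusion is that $c>\rad(abc)^{1+\varepsilon}$, i.e.\ that $(a,b,c)$ is an exceptional triple in the sense of $\mathscr{E}(x)$. I read the displayed inequality in the statement as a misprint for this failure of the $abc$ inequality.

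\textbf{Step 1: two good smooth integers.} By Corollary \ref{cor5757SI.350A} the interval $[x,x+x^{3/5}]$ contains $x^{3/5}e^{-(\log x)^{1/3+\beta}}(1+o(1))$ $B$-smooth integers. By Theorem \ref{thm5757SI.875} (hypothesis (iii)), all but a negligible proportion of them satisfy $\omega(n)\le 2\log\log x$. So we can pick two distinct such integers $N<N'$ in the interval. Lemma \ref{lem5757SI.335A} then gives
\[
\rad(N),\ \rad(N')\le e^{2c_0(\log x)^{2/3}(\log\log x)^{7/3}}=x^{o(1)}.
\]

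\textbf{Step 2: the triple.} Put $g=\gcd(N,N')$, and set $b=N/g$, $c=N'/g$, $a=(N'-N)/g$. Then $a+b=c$ and $\gcd(a,b,c)=1$. Since $g\mid N'-N\le x^{3/5}$, we have $c\ge x^{2/5}$, and also
\[
\rad(abc)\le \rad(N)\,\rad(N')\,a\le x^{o(1)}\cdot \frac{x^{3/5}}{g},\qquad c\ge \frac{x}{g}.
\]
To get $c>\rad(abc)^{1+\varepsilon}$ it suffices to check $x/g> (x^{3/5+o(1)}/g)^{1+\varepsilon}$ uniformly in $1\le g\le x^{3/5}$. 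The worst case is $g=1$, where the requirement is $1>(3/5)(1+\varepsilon)$. This holds for every $\varepsilon<2/3$, and the statement is about small $\varepsilon$.

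\textbf{Step 3: infinitely many triples.} Letting $x\to\infty$ through a sparse sequence makes the intervals disjoint. Each interval produces a triple with $c\ge x^{2/5}\to\infty$, which gives infinitely many triples and the unboundedness of $\#\mathscr{E}(x)$ in Theorem \ref{thm5757SI.940}.

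\textbf{Main obstacle.} The whole argument rests on Step 1, namely the simultaneous use of Corollary \ref{cor5757SI.350A} and Theorem \ref{thm5757SI.875} to get smooth $N$ with very few prime factors. This is exactly where I expect trouble. A $B$-smooth integer of size $x$ has at least $u=\log x/\log B$ prime factors counted with multiplicity. If it also has only $O(\log\log x)$ distinct primes, it must carry huge prime powers, and such integers are extremely rare, far rarer than $x^{3/5}\rho(u)$. I would therefore first re-verify the mean of $\omega$ in Lemma \ref{lem5757SI.875A}. I would check whether $\sum_{p\le y}\rho(u-\log p/\log y)/p$ really equals $\rho(u)\log\log y$, given that the shift $\log p/\log y$ is of size up to $1$, not $o(1/u)$. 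I would also check whether the variance bound survives. If it does not, which I suspect, the plan fails at this step, and no estimate for smooth numbers alone can rescue it.
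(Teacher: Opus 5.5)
\textbf{Step 1 is a genuine gap, and it cannot be repaired. Your suspicion is correct, and the paper's own proof fails at the same point.}

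The paper does not use two smooth numbers. It takes $x=p^k$ with a prime $p>B$, one $B$-smooth $c\in[x,x+x^{3/5}]$, and sets $b=p^k$, $a=c-p^k$, so that $\rad(abc)\le x^{3/5}\,p\,\rad(c)$. Your gcd construction is a harmless variant. It removes the factor $x^{1/k}$, at the price of needing two good integers in one window, and your Steps 2--3 are correct conditionally. But both versions need a $B$-smooth $N$ in a window of length $x^{3/5}$ with $\omega(N)\le 2\log\log x$, i.e.\ $\rad(N)=x^{o(1)}$. The paper gets this only from Theorem~\ref{thm5757SI.875}, which is false.

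Here is why such $N$ are rare. An $N\le 2x$ of this kind is determined by at most $k=2\log\log x$ primes $\le B$ and exponents $\le\log_2(2x)$. So there are at most $(2B\log x)^{k}=\exp\bigl((2c_0+o(1))(\log x)^{2/3}(\log\log x)^{7/3}\bigr)=x^{o(1)}$ of them in all of $[1,2x]$. By contrast, Corollary~\ref{cor5757SI.350A} puts $x^{3/5}\rho(u)=x^{3/5-o(1)}$ smooth integers in the window. So integers with few prime factors form a proportion at most $x^{-3/5+o(1)}$ of the smooth integers there, not almost all of them. Moreover, the set of $x\in[X,2X]$ whose window contains even one such $N$ has measure at most $X^{3/5+o(1)}$. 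Nothing in the argument makes $x=p^k$ special.

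\textbf{Where the paper's computation breaks.} It is exactly the spot you flagged. One has $\rho(u-t)\asymp\rho(u)e^{t\xi(u)}$ uniformly for $0\le t\le 1$, where $e^{\xi(u)}=1+u\xi(u)$; for instance $\rho(u-1)=u|\rho'(u)|\sim u\xi(u)\rho(u)$. Put $t=\log p/\log y$ and apply Mertens:
\[
\sum_{p\le y}\frac1p\,\rho\Bigl(u-\frac{\log p}{\log y}\Bigr)\asymp\rho(u)\Bigl(\log\log y+\int_0^1\frac{e^{\xi(u)t}-1}{t}\,dt\Bigr)\asymp\rho(u)\,u .
\]
So the mean of $\omega$ over $B$-smooth integers in the window is of order $u=(\log x)^{1/3-o(1)}$, not $\log\log y$.

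\begin{itemize}
\item Lemma~\ref{lem5757SI.835B} only gives an absolute error $O(u^{-6})$. This is astronomically larger than $\rho(u)=e^{-(1+o(1))u\log u}$.
\item Lemma~\ref{lem5757SI.875A} turns that error into $O(h)$ and then absorbs it into $O(h\rho(u))$, which is illegitimate.
\item Hence Lemmas~\ref{lem5757SI.875A} and \ref{lem5757SI.875C} and Theorem~\ref{thm5757SI.875} all fail.
\item Lemma~\ref{lem5757SI.335A} is a valid implication, but it is applied to integers the argument never produces.
\end{itemize}

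Hypothesis (iii) cannot fill the hole either. As literally written, over all $n\le x$, it is false: the multiples of the product of the first $\lceil w\rceil+1$ primes already number $x^{1-o(1)}$. Its intended version for smooth $n$ is refuted by the count above.

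\textbf{Why no repair should be expected.} Typical $B$-smooth integers are nearly squarefree with about $u$ prime factors. So $B^{\omega(N)}\approx B^u=x$, and smoothness buys nothing on the radical. What is really needed is an integer of radical $x^{o(1)}$ lying within $x^{3/5}$ of another integer of small radical ($p^k$, or your $N'$). That is itself an $abc$ triple of quality near $5/3$, beyond every known example. The intended conclusion would refute the $abc$ conjecture. As you say, smooth-number estimates alone cannot supply Step 1.
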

\begin{proof}[\textbf{Proof}] Fix a small number $\varepsilon>0$. Let $x=p^k$ be a large prime power, where $B<p$ and $k >3$ such that 
\begin{equation}\label{eq5757SI.425A06}
\left( \frac{3}{5}+\frac{1}{k}\right)(1+\varepsilon)<1.
	\end{equation}
By \hyperlink{cor5757SI.350A}{Corollary} \ref{cor5757SI.350A} there exists a $B$-smooth integer in the short interval 
\begin{equation}\label{eq5757SI.425A08}
c=p_1^{e_1}\cdot p_2^{e_2}\;\cdots\;\cdot p_w^{e_w}\in [x,x+h ],
\end{equation}
where $p_i\leq B$ are distinct prime, $e_i\geq0$. And $w<\log\log y+A(\log\log y)^{1/2+\delta}\leq 2\log \log x$ follows from  \hyperlink{thm5757SI.875}{Theorem} \ref{thm5757SI.875}. Now, let 
\begin{align}\label{eq5757SI.425A10}
	a&=|p^k-p_1^{e_1}\cdot p_2^{e_2}\;\cdots\;\cdot p_w^{e_w}|\leq h\nonumber\\[.2cm]
	b&=p^k\nonumber\\[.2cm]
	c&=p_1^{e_1}\cdot p_2^{e_2}\;\cdots\;\cdot p_w^{e_w}.
\end{align}
A short inspection shows that, $\gcd(a,b,c)=1$, see \hyperlink{rmk5757GRP.002}{Remark} \ref{rmk5757GRP.002}. Next, take the $abc$ inequality to derive a contradiction:
\begin{eqnarray}\label{eq5757SI.425A12}
c&\leq &c_{\varepsilon}\left( \text{rad}\left(abc \right)  \right)^{1+\varepsilon}\nonumber\\[.2cm]
&\leq &c_{\varepsilon}\left( \text{rad}\left(h\cdot p^k\cdot	p_1\cdot p_2\;\cdots\;\cdot p_w \right) \right)^{1+\varepsilon}\nonumber\\[.2cm]
&\leq &c_{\varepsilon}\left( 	x^{\frac{3}{5}}\cdot  x^{\frac{1}{k}} \cdot e^{2c_0(\log x)^{\frac{2}{3}}(\log \log x)^{\frac{7}{3}}}  \right)^{1+\varepsilon}\nonumber\\[.2cm]
&\leq &c_{\varepsilon} \left(  x^{\frac{3}{5}+\frac{1}{k}}\cdot e^{2c_0(\log x)^{\frac{2}{3}}(\log \log x)^{\frac{7}{3}}}\right)^{(1+\varepsilon)},
\end{eqnarray}
where $c_0>0$ is a suitable constant. The third line in \eqref{eq5757SI.425A12} follows from \hyperlink{lem5757SI.335A}{Lemma} \ref{lem5757SI.335A}. Now, since $k>3$
\begin{eqnarray}\label{eq5757SI.425A14}
c=x+O(x^{\frac{3}{5}+\delta})&\leq &c_{\varepsilon} \left(  x^{\frac{3}{5}+\frac{1}{k}}\cdot e^{2c_0(\log x)^{\frac{2}{3}}(\log \log x)^{\frac{7}{3}}}\right)^{(1+\varepsilon)}\\[.2cm]
&\leq &c_{\varepsilon} \cdot  x^{\left( \frac{3}{5}+\frac{1}{k}\right) (1+\varepsilon)}\cdot e^{2c_0(1+\varepsilon)(\log x)^{\frac{2}{3}}(\log \log x)^{\frac{7}{3}}}\nonumber
\end{eqnarray}
is a contradiction infinitely often as $x\to \infty$. 

\end{proof}
The relationship between $k$ and $\varepsilon $ expressed in \eqref{eq5757SI.425A06} is tabulated in \autoref{table-101}.

\begin{table}[H]
	\setlength{\tabcolsep}{.4513cm}
	\renewcommand{\arraystretch}{1.0950}
	\setlength{\arrayrulewidth}{.97pt}
	\centering
	\begin{tabular}{|l|c|c|c|c|c|}
		\hline
		$k$ & 4 & 10 & 100 & 1000&$\infty$ \\
		\hline
		$\varepsilon<$ & 0.176471 & 0.428571&0.639344 & 0.663893&.666667 \\
		\hline
	\end{tabular}
		\caption{The Parameter $\varepsilon$ as a Function $k$}
	\label{table-101}
\end{table}
The quality index of the exceptional triples is 
\begin{eqnarray}
\gamma_{\varepsilon}(abc)&=&\frac{\log c}{\log \rad abc}\nonumber\\[.2cm]
&=&\frac{\log x+\log(1+o(1))}{ (1+\varepsilon )\left( \frac{3}{5}+\frac{1}{k}+o(1)\right) \log x+\log c_{\varepsilon}}\nonumber\\[.2cm]
&=&\frac{1}{ (1+\varepsilon )\left( \frac{3}{5}+\frac{1}{k}\right)}
\end{eqnarray}
for large $x>1$. Fix $\varepsilon=0.01$, then

\begin{table}[H]
	\setlength{\tabcolsep}{.413cm}
	\renewcommand{\arraystretch}{1.0950}
	\setlength{\arrayrulewidth}{.97pt}
	\centering
	\begin{tabular}{|l|c|c|c|c|c|}
		\hline
		$k$ & 4 & 10 & 100 & 1000&$\infty$ \\
		\hline
		$\gamma_{k}(abc)$ &1.164822 & 1.414427& 1.623113&1.647419&1.650165 \\
		\hline
	\end{tabular}
\caption{Quality Index $\gamma_{k}(abc)$ as a Function $k$}
	\label{table-103}
\end{table}
As a function of $k>3$, the quality index of the exceptional triples generated here has a limit $\gamma_k(abc)=1.650165$ that exceeds the best known index $\gamma(abc)=1.6299$ for the top $abc$ triple $(a,b,c)=(2, 3^{10}\cdot 109,23^5)$, see \autoref{table-103}. The determination of an exceptional triple of index $\gamma_{100}(abc)\approx 1.6393$ involves calculations with extremely large integers $x=p^k=b$ to find a $B$-smooth integer $c$ in the interval $x,x+x^{3/5}$, this problem is quite similar to the Skewes number problem.\\

The cube $[1,x]\times[1,x]\times[1,x]\subset \N\times\N\times\N$ contains about $6\pi^{-2}x^2$ relatively primes triples $(a,b,c)$, the result in \cite{LJ2025} shows that about $\#\mathscr{E}(x)=O(x^{2/3})$ fail the $abc$ inequality. The goal is to prove that just $\#\mathscr{E}(x)=O(1)$ fail the $abc$ inequality. However, the analysis within seems to show that the cardinality of exceptional set $\mathscr{E}(x)$ is bounded below, specifically $\#\mathscr{E}(x)\gg x^{\delta}$ as $x\to\infty$. 

\begin{rmk}\label{rmk5757GRP.002}\hypertarget{rmk5757GRP.002}{\normalfont
		In more detail, the relatively prime condition is as follows: Since $p>B\geq P(c)$, it follows that $p\nmid c$ and $\gcd(b,c)=1$. Any prime $q$ dividing both $a$ and $b$ divides $b=p^k$ and therefore $q=p$; but $p\nmid a$ because $a=|p^k-c|$ and $p\nmid c$. Thus $\gcd(a,b)=1$. Finally, if a prime $q$ divided $a,b,c$ simultaneously then $q\mid b$ and $q\mid c$, contradicting $p\nmid c$ and the fact that all prime divisors of $c$ are $\le B<p$. Therefore $\gcd(a,b,c)=1$.}
\end{rmk}

\section{A Lower Bound of the Exceptional Set}\label{S5757GRP-V}\hypertarget{S5757GRP-V}
The qualitative is much simpler and it is presented here.

\begin{proof}[{\color{blue}\textbf{Proof:} }{\normalfont \hyperlink{thm5757SI.940}{Theorem} \ref{thm5757SI.940}}] For each large real number $x>1$, \hyperlink{thm5757SI.425A}{Theorem} \ref{thm5757SI.425A} proves that exceptional set 
	\begin{equation}
		\mathscr{E}(x)\ne\varnothing.
	\end{equation}
	There exists at least one triple $(a,b,c)$ that contradicts the abc inequality \eqref{eq5757SI.425A04}.	The set $	\mathscr{E}(x)$ are monotonically increasing. Hence, an increasing collection of finite sets with infinite union must have unbounded cardinality. the limit
	\begin{equation}
		\mathscr{E}=\bigcup_{x\to\infty}	\mathscr{E}(x)
	\end{equation}
	is infinite. In particular, in the limit, the cardinality of the exceptional set  	
	\begin{equation}
		\#	\mathscr{E}=\lim_{x\to\infty}\#	\mathscr{E}(x)
	\end{equation}
	is unbounded.
\end{proof}


\end{document}